\documentclass[11pt]{article}

\usepackage[a4paper,margin=1.12in]{geometry}
\usepackage{amsmath,amsthm,amssymb,mathtools}
\usepackage{xcolor}
\usepackage{microtype}
\usepackage{enumitem}
\usepackage[hidelinks]{hyperref}
\usepackage{cleveref}

\allowdisplaybreaks

\newtheorem{maintheorem}{Theorem}

\newtheorem{proposition}{Proposition}[section]
\newtheorem{lemma}[proposition]{Lemma}
\newtheorem{corollary}[proposition]{Corollary}
\theoremstyle{remark}
\newtheorem{remark}[proposition]{Remark}

\newcommand{\K}{\mathbb K}
\newcommand{\R}{\mathbb R}
\newcommand{\C}{\mathbb C}
\newcommand{\N}{\mathbb N}
\newcommand{\G}{\Gamma}
\newcommand{\eps}{\varepsilon}
\newcommand{\norm}[1]{\left\lVert #1\right\rVert}
\newcommand{\abs}[1]{\left|#1\right|}
\newcommand{\pair}[2]{\left\langle #1,#2\right\rangle}

\title{Effective Dimensions in Grothendieck--H\"older Inequalities}

\author{%
Joedson Santos\thanks{Departamento de Matem\'atica,
Centro de Ci\^encias Exatas e da Natureza, Universidade Federal da
Para\'iba, Jo\~ao Pessoa, PB, Brazil.
Email: \texttt{joedson.santos@academico.ufpb.br}}
\and
Diana Serrano-Rodr\'iguez\thanks{Departamento de Matem\'aticas,
Facultad de Ciencias, Universidad Nacional de Colombia, Sede Bogot\'a,
Bogot\'a, Colombia. Email: \texttt{diserranor@unal.edu.co}}
}

\date{}

\begin{document}
\maketitle

\begin{abstract}
What becomes of Grothendieck's inequality when its Hilbert-space
vectors are replaced by vectors in $\ell_p^r$ and $\ell_{p'}^r$?
For an $m\times n$ matrix, we show that the universal dimensional
loss is governed not by $r$ alone, but by the effective dimension
\[
d=\min\{r,m,n\}.
\]
More precisely, the optimal constant is bounded above and below,
up to dimension-free factors, by
\[
d^{|1/p-1/2|}.
\]
The upper bound rests on a pointwise Hilbertian stabilization:
for every matrix $A$,
\[
G_2^{(r)}(A)=G_2^{(d)}(A).
\]
Combining this stabilization with Lewis's Euclidean distortion
estimate gives the effective-dimensional Grothendieck--H\"older
bound.

Beyond the universal power law, we retain the finer rectangular
geometry. At $p=\infty$ we prove the exact compression formula
\[
\Gamma_\infty^\K(r;m,n)
=
\rho\!\left(
\ell_1^m(\K),
\ell_1^{\min\{r,n\}}(\K)
\right),
\]
and derive from it endpoint-sensitive lower bounds throughout the
full H\"older scale. Over the complex field, interpolation between
the Hilbertian problem and the exact rectangular endpoints yields
corresponding upper bounds. Thus the effective dimension determines
the universal exponent, whereas the tensor-norm endpoint retains
additional aspect-ratio information.

We determine the boundedness threshold when $p$ approaches $2$:
it is governed by
\[
|1/p-1/2|\log d.
\]
For complex two-row matrices, the endpoint tensor ratios are explicit,
and the general rectangular estimates give quantitative two-sided
bounds for every $1\le p\le\infty$.
\end{abstract}

\section{Introduction}

Let $A=(a_{ij})\in\K^{m\times n}$, where $\K=\R$ or $\C$.
A finite-dimensional form of Grothendieck's inequality says that
\begin{equation}\label{eq:intro-Grothendieck}
\sup_{\substack{x_1,\ldots,x_m\in B_{\ell_2^r}\\
                y_1,\ldots,y_n\in B_{\ell_2^r}}}
\abs{\sum_{i=1}^m\sum_{j=1}^n a_{ij}\pair{x_i}{y_j}}
\le
K_G^{\K}
\sup_{\substack{|s_i|\le1\\|t_j|\le1}}
\abs{\sum_{i=1}^m\sum_{j=1}^n a_{ij}s_it_j},
\end{equation}
where $K_G^{\K}$ does not depend on $r$, $m$, $n$, or $A$;
see \cite{Grothendieck1953,LindenstraussPelczynski1968}.
Thus allowing Hilbert-space vectors in place of scalars changes
the supremum by at most a universal factor.

Here we ask what happens if the two families of Hilbert-space
vectors are replaced by vectors in the dual pair
$\ell_p^r$ and $\ell_{p'}^r$. At $p=2$ we recover
\eqref{eq:intro-Grothendieck}. Away from $2$, a loss depending on
the dimensions is unavoidable. The question is which dimensions
are responsible for that loss.

There is a substantial theory behind this question.
Grothendieck-type inequalities for operators on $L_p$ spaces
and their tensor-product formulations appear, for example, in
\cite{CarlDefant1992,DefantFloret1993}. Defant and Junge studied
finite Marcinkiewicz--Zygmund inequalities for general operators
and the growth of their constants outside the dimension-free
range \cite{DefantJunge1997}. Their later vector-valued
Grothendieck inequality concerns an operator-space extension
different from the one considered here \cite{DefantJunge1999}.

The formulation closest to ours is that of Friedland, Lim and
Zhang. They express Grothendieck's constant as a norm of the
matrix multiplication tensor $\mu_{r,m,n}$ and study the
associated family of tensor norms
\cite{FriedlandLimZhang2019}. In the notation used below, their
estimate along the line of exponents considered here gives
\[
\norm{\mu_{r,m,n}}_{1,p,\infty}
\le K_G^{\K}r^{|1/p-1/2|}.
\]
In particular, the exponent $|1/p-1/2|$ and the special role
of $p=2$ are not new. What this estimate does not distinguish
is the effect of the two matrix dimensions: its right-hand side
still grows with $r$ when $m$ or $n$ is small.

The contribution of the present paper is therefore not the exponent
$|1/p-1/2|$ itself. There are two further questions which are invisible
in the estimate above.

The first is an effective-dimension question. If the vector dimension
$r$ is much larger than one of the matrix dimensions, there is no
reason a priori for all $r$ vector coordinates to remain relevant.
We prove that the correct universal dimensional parameter is
\[
d=\min\{r,m,n\}.
\]
At the Hilbertian point this comes from an exact pointwise
stabilization, valid separately for every matrix:
\[
G_2^{(r)}(A)=G_2^{(d)}(A).
\]
Away from $p=2$, the same effective dimension enters through the
Euclidean distortion of the spans generated by the two vector
families.

The second question is finer. The number $d$ determines the universal
power of the constants, but it does not contain all rectangular
information. At the endpoints $p=1,\infty$, the optimal constants
compress exactly to projective-to-injective tensor norm ratios whose
two dimensions need not coincide. We show that these exact endpoint
ratios generate lower envelopes throughout the full H\"older scale.
Over the complex field they also generate, by interpolation with the
Hilbertian problem, endpoint-sensitive upper envelopes. Thus the paper
separates two levels of geometry:
\[
\boxed{
\text{effective dimension }d
\quad\text{controls the universal power law},
}
\]
whereas
\[
\boxed{
\text{rectangular tensor ratios}
\quad\text{retain the finer aspect-ratio dependence}.
}
\]

We work over $\K\in\{\R,\C\}$. For $1\le p\le\infty$, let $p'$ denote the H\"older conjugate. Throughout, we use the convention $1/\infty=0$. If $A=(a_{ij})\in\K^{m\times n}$, set
\begin{equation}\label{eq:matrix-norm}
 \norm{A}_{\infty,1}:=
 \sup_{\norm{z}_\infty\le1}\norm{Az}_1
 =\sup_{\substack{\norm{s}_\infty\le1\\\norm{t}_\infty\le1}}
 \abs{\sum_{i=1}^m\sum_{j=1}^n a_{ij}s_it_j}.
\end{equation}
For $r\in\N$ define
\begin{equation}\label{eq:G-rmn}
 G_p^{(r)}(A):=
 \sup_{\substack{x_1,\ldots,x_m\in B_{\ell_p^r}\\
                  y_1,\ldots,y_n\in B_{\ell_{p'}^r}}}
 \abs{\sum_{i=1}^m\sum_{j=1}^n a_{ij}\pair{x_i}{y_j}},
\end{equation}
where $\pair{x}{y}=\sum_{k=1}^r x_ky_k$. \textcolor{black}{Over $\C$, this denotes the bilinear duality pairing, not the Hermitian inner product.} We then put
\begin{equation}\label{eq:Gamma-rmn}
 \G_p^{\K}(r;m,n):=
 \sup_{A\in\K^{m\times n}\setminus\{0\}}
 \frac{G_p^{(r)}(A)}{\norm{A}_{\infty,1}}.
\end{equation}
With the notation of \cite{FriedlandLimZhang2019},
\[
 \G_p^{\K}(r;m,n)=\norm{\mu_{r,m,n}}_{1,p,\infty}.
\]
We omit the superscript $\K$ when the scalar field is fixed.

For Banach spaces $X$ and $Y$ over $\K$, let
\[
\rho(X,Y):=
\sup_{0\ne z\in X\otimes Y}
\frac{\norm z_\pi}{\norm z_\varepsilon}
\]
denote the projective-to-injective tensor norm ratio.

For integers $u,v\ge1$, it will be convenient to write
\begin{equation}\label{eq:Ruv-def}
\mathcal R_{\K}(u,v)
:=
\rho\bigl(\ell_1^u(\K),\ell_1^v(\K)\bigr).
\end{equation}
The tensor flip gives
\[
\mathcal R_{\K}(u,v)=\mathcal R_{\K}(v,u).
\]
Moreover, zero padding shows that $\mathcal R_{\K}(u,v)$ is
nondecreasing in each variable. The exact endpoint compression proved
below can then be written simply as
\[
\Gamma_\infty^\K(r;m,n)
=
\mathcal R_{\K}\bigl(m,\min\{r,n\}\bigr).
\]

Our first answer is that the universal dimensional scale is governed by
\[
d=\min\{r,m,n\}.
\]
For each individual matrix $A$, the Hilbertian numerator already
stabilizes at this dimension:
\[
G_2^{(r)}(A)=G_2^{(d)}(A).
\]
Moreover,
\[
G_p^{(r)}(A)
\le
d^{|1/p-1/2|}G_2^{(d)}(A).
\]
Together with the endpoint constructions developed below, this gives
\[
\Gamma_p^\K(r;m,n)
\asymp
d^{|1/p-1/2|}
\]
with constants independent of all three dimensions and of $p$.

The effective dimension, however, determines only the universal power
law. A second level of the problem retains genuinely rectangular
information. At $p=\infty$ we prove the exact identity
\[
\Gamma_\infty^\K(r;m,n)
=
\mathcal R_{\K}\bigl(m,\min\{r,n\}\bigr),
\]
whereas at $p=1$ duality gives
\[
\Gamma_1^\K(r;m,n)
=
\mathcal R_{\K}\bigl(n,\min\{r,m\}\bigr).
\]
These endpoint quantities remember the aspect ratio even when the
effective dimension $d$ is fixed.

More generally, Theorem~\ref{thm:C} shows that every intermediate
endpoint dimension supplies a lower bound. For $p\ge2$,
\[
\Gamma_p^\K(r;m,n)
\ge
\sup_{1\le k\le\min\{r,n\}}
k^{-1/p}\mathcal R_{\K}(m,k),
\]
and there is a dual formula for $p\le2$. Over the complex field, the
exact rectangular endpoint also interpolates with the Hilbertian
constant to give an upper bound throughout the interior of the
H\"older scale.

Thus there are two complementary notions of dimension in the problem:
\[
\boxed{
d=\min\{r,m,n\}
\text{ controls the leading power},
}
\]
while
\[
\boxed{
\mathcal R_{\K}(m,k)
\text{ and }
\mathcal R_{\K}(n,k)
\text{ control finer rectangular effects}.
}
\]

The distinction can be substantial. For instance, if
$r=N^{10}$ and $m=n=N$, a bound expressed in terms of $r^{1/2}$
has size $N^5$ at $p=\infty$, whereas the universal effective scale is
only $N^{1/2}$. On the other hand, even after $d$ is fixed, the exact
endpoint may still vary with the aspect ratio. The $2\times3$ example
below shows that Hilbertian stabilization does not imply pointwise
endpoint stabilization.

The proof of the upper bound follows this observation closely.
The vectors from one family span a space of dimension at most
$\min\{r,m\}$. On that space, each vector from the other family
acts as a bounded functional. Lewis's estimate allows us to
replace this small space by a Euclidean one at a cost controlled
by its dimension \cite{Lewis1978}. Interchanging the two
families brings $n$ into the estimate.

For the lower bound, Fourier matrices give the required factor
in every complex dimension. In the real case, we use corners
of Hadamard matrices. Bounds on gaps between Hadamard orders
then give a lower factor of the form $1-O(d^{-5/6})$
\cite{Livinskyi2012,BrentOsbornSmith2016}.

Theorem \ref{thm:D} describes what happens when $p$ varies with the dimensions. If $d_N\to\infty$, the constants
remain bounded precisely when
\[
\abs{\frac1{p_N}-\frac12}\log d_N
\]
remains bounded. Thus the transition takes place when
$|1/p_N-1/2|$ is of order $1/\log d_N$. Inside that window,
we obtain bounds for the possible limiting constants rather
than asserting a single limit.

At $p=\infty$, the optimal rectangular constant has a more
precise description than the general effective-dimension estimate:
\[
\G_\infty^\K(r;m,n)
=
\rho\bigl(\ell_1^m(\K),
          \ell_1^{\min\{r,n\}}(\K)\bigr).
\]
This follows from the partition formula for the numerator.
In the complex two-row case it yields
\[
\G_\infty^\C(r;2,n)
=
\min\{r,n\}
\sin\!\left(\frac{\pi}{2\min\{r,n\}}\right).
\]
The compression concerns optimal constants; it does not imply
pointwise stabilization of $G_\infty^{(r)}(A)$ at
$\min\{r,m,n\}$. A $2\times3$ example below distinguishes
these statements.

For $d\in\N$, define the Hilbert-dimension-$d$ Grothendieck
constant by
\begin{equation}\label{eq:kappaGd}
\kappa_G^{\K}(d)
:=
\sup_{m,n\ge1}\Gamma_2^{\K}(d;m,n).
\end{equation}
Then $\kappa_G^{\K}(d)$ is nondecreasing and
\begin{equation}\label{eq:kappaGd-limit}
\kappa_G^{\K}(d)\uparrow K_G^{\K}.
\end{equation}
Finally, define the normalized square endpoint
\begin{equation}\label{eq:Bd}
 B_d^{\K}:=
 \frac{\G_\infty^{\K}(d;d,d)}{\sqrt d}.
\end{equation}

Our main result is the following.

\begin{maintheorem}\label{thm:A}
Let $r,m,n\in\N$, $1\le p\le\infty$, and put
\[
 d:=\min\{r,m,n\},
 \qquad
 a(p):=\abs{\frac1p-\frac12}.
\]
Then
\begin{equation}\label{eq:main-bottleneck}
 B_d^{\K}d^{a(p)}
 \le
 \G_p^{\K}(r;m,n)
 \le
 d^{a(p)}\G_2^{\K}(r;m,n)
 \le
 \kappa_G^{\K}(d)d^{a(p)}.
\end{equation}
More precisely, for every $A\in\K^{m\times n}$,
\begin{equation}\label{eq:Hilbert-saturation}
 G_2^{(r)}(A)=G_2^{(d)}(A),
\end{equation}
and
\begin{equation}\label{eq:pointwise-bottleneck}
 G_p^{(r)}(A)
 \le
 d^{a(p)}G_2^{(d)}(A)
 =d^{a(p)}G_2^{(r)}(A).
\end{equation}
\end{maintheorem}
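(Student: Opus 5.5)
The plan is to prove the pointwise statements \eqref{eq:Hilbert-saturation} and \eqref{eq:pointwise-bottleneck} first, and then derive the chain \eqref{eq:main-bottleneck} from them. Throughout I use that $G_p^{(r)}(A)$ is nondecreasing in $r$ (zero padding of vectors) and that $\G_p^{\K}(r;m,n)$ is nondecreasing in $r,m,n$ (zero padding of vectors and matrices).

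\emph{Hilbertian stabilization.} Over $\C$, write the bilinear pairing as $\pair{x}{y}=(x,\bar y)$, where $(\cdot,\cdot)$ is the Hermitian inner product. Fix unit-ball families $x_i$ and $y_j$ and let $E=\operatorname{span}\{x_i\}$.
\begin{enumerate}[label=(\roman*)]
\item Replace each $\bar y_j$ by its orthogonal projection onto $E$. This leaves every pairing $(x_i,\bar y_j)$ unchanged and does not increase norms.
\item The new conjugated vectors span a subspace $F\subset E$ with $\dim F\le\min\{r,m,n\}=d$.
\item Project each $x_i$ orthogonally onto $F$. Since the conjugated $y_j$ now lie in $F$, the pairings are again unchanged.
\item Choose a unitary $U\colon F\to\C^{\dim F}\subset\C^d$. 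Set $x_i'=Ux_i$ and $y_j'=\overline{U\bar y_j}$. Then $\pair{x_i'}{y_j'}=(Ux_i,U\bar y_j)=(x_i,\bar y_j)$ and all norms are preserved.
\end{enumerate}
This gives $G_2^{(r)}(A)\le G_2^{(d)}(A)$. The reverse inequality is monotonicity. The real case is the same argument without conjugations.

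\emph{Pointwise bottleneck.} Take $x_i\in B_{\ell_p^r}$, $y_j\in B_{\ell_{p'}^r}$, and let $E=\operatorname{span}\{x_i\}\subset\ell_p^r$, with $k=\dim E\le\min\{r,m\}$.
\begin{enumerate}[label=(\roman*)]
\item By Lewis's theorem \cite{Lewis1978} there is an isomorphism $T\colon E\to\ell_2^k$ with $\norm{T}=1$ and $\norm{T^{-1}}\le k^{a(p)}$.
\item Each functional $y_j|_E$ has norm at most $1$, so it is represented by some $y_j'\in\ell_2^k$ with $\pair{Tx}{y_j'}=\pair{x}{y_j}$ for $x\in E$ and $\norm{y_j'}_2\le\norm{T^{-1}}$. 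Over $\C$, the representation is taken through the bilinear pairing, i.e.\ by conjugating the Riesz vector.
\item Hence $\abs{\sum a_{ij}\pair{x_i}{y_j}}\le k^{a(p)}G_2^{(k)}(A)$.
\end{enumerate}
Running the same argument with the roles of the families exchanged, i.e.\ taking the span of the $y_j$ in $\ell_{p'}^r$ of dimension at most $\min\{r,n\}$ and using $a(p')=a(p)$, gives the bound with $\min\{r,n\}$. Taking the better of the two bounds yields $k\le d$. Monotonicity in $k$ together with the stabilization step gives $G_p^{(r)}(A)\le d^{a(p)}G_2^{(d)}(A)=d^{a(p)}G_2^{(r)}(A)$. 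I expect the main obstacle to be the correct use of Lewis's estimate here: it must be applied to the span (the subspace of $\ell_p^r$), not to the whole of $\ell_p^r$, so that the cost depends on the dimension of the span rather than on $r$. The complex bilinear bookkeeping also needs care.

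\emph{The chain \eqref{eq:main-bottleneck}.}
\begin{enumerate}[label=(\roman*)]
\item Dividing the pointwise bound by $\norm{A}_{\infty,1}$ gives the middle inequality.
\item Stabilization gives $\G_2(r;m,n)=\G_2(d;m,n)\le\kappa_G^{\K}(d)$, which is the right inequality.
\item For the left inequality, monotonicity gives $\G_p(r;m,n)\ge\G_p(d;d,d)$. For $p\ge2$, if $x_i\in B_{\ell_\infty^d}$ and $y_j\in B_{\ell_1^d}$, then $d^{-1/p}x_i\in B_{\ell_p^d}$ and $y_j\in B_{\ell_{p'}^d}$. Hence $\G_p(d;d,d)\ge d^{-1/p}\G_\infty(d;d,d)=B_d^{\K}d^{1/2-1/p}$.
\item For $p\le2$, exchange the roles of the two families and transpose $A$; this uses $\norm{A^T}_{\infty,1}=\norm{A}_{\infty,1}$ and $\G_1(d;d,d)=\G_\infty(d;d,d)$. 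The same computation then gives $B_d^{\K}d^{1/p-1/2}$.
\end{enumerate}
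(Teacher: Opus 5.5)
Your proposal is correct and follows essentially the paper's proof. It uses Lewis's estimate on the span of one family (dimension at most $\min\{r,m\}$), exchanges the families to bring in $\min\{r,n\}$, and obtains the lower bound by the rescaling $x_i\mapsto d^{-1/p}x_i$ plus duality. The one variation is that you prove $G_2^{(r)}(A)=G_2^{(d)}(A)$ by a direct double orthogonal projection onto a common subspace of dimension at most $\min\{r,m,n\}$, whereas the paper reads it off as the $p=2$ case of its Lewis-reduction lemma, where the distortion factor is $1$; both are valid.
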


The endpoint in Theorem \ref{thm:A} has the following deterministic lower estimate. Let $\mathcal H$ be the set of orders of real Hadamard matrices and define
\begin{equation}\label{eq:hplus}
 h_+(d):=\min\{h\in\mathcal H:h\ge d\}.
\end{equation}
The minimum exists because the Sylvester construction provides Hadamard matrices of every order $2^k$. Put
\begin{equation}\label{eq:beta-d}
 \beta_d^{\C}:=1,
 \qquad
 \beta_d^{\R}:=\sqrt{\frac{d}{h_+(d)}}.
\end{equation}

\begin{maintheorem}\label{thm:B}
For every $d\in\N$,
\begin{equation}\label{eq:Bd-lower}
 B_d^{\K}\ge\beta_d^{\K}.
\end{equation}
Moreover,
\begin{equation}\label{eq:beta-rate}
 \beta_d^{\C}=1,
 \qquad
 1-\beta_d^{\R}=O(d^{-5/6}).
\end{equation}
Therefore, for all $r,m,n\in\N$ and all $1\le p\le\infty$, with $d=\min\{r,m,n\}$,
\begin{equation}\label{eq:uniform-rectangular}
 \beta_d^{\K}d^{a(p)}
 \le
 \G_p^{\K}(r;m,n)
 \le
 \kappa_G^{\K}(d)d^{a(p)}.
\end{equation}
If $\K=\C$, the lower factor in \eqref{eq:uniform-rectangular} is exactly $1$ for every $d$. If $\K=\R$ and $d$ is a Hadamard order, it is also exactly $1$. In particular,
\begin{equation}\label{eq:rectangular-order}
 \G_p^{\K}(r;m,n)
 \asymp
 \min\{r,m,n\}^{|1/p-1/2|}
\end{equation}
with constants independent of $r,m,n$ and $p$.
\end{maintheorem}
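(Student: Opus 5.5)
We need to prove Theorem B: $B_d\ge\beta_d$, the rate $1-\beta_d^\R=O(d^{-5/6})$, and then combine these with Theorem~\ref{thm:A}.

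\textbf{Lower bound for $B_d$.} By definition $B_d^{\K}=\G_\infty^{\K}(d;d,d)/\sqrt d$, so it suffices to exhibit $A\in\K^{d\times d}$ and unit vectors $x_i\in B_{\ell_\infty^d}$, $y_j\in B_{\ell_1^d}$ with $\abs{\sum a_{ij}\pair{x_i}{y_j}}\ge \beta_d\sqrt d\,\norm{A}_{\infty,1}$.

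\emph{Complex case.} Take the Fourier matrix $F=(\omega^{(i-1)(j-1)})$, $\omega=e^{2\pi i/d}$, and $A=\overline F$. Choose $x_i=$ the $i$-th row of $F$ (unimodular entries, so $\norm{x_i}_\infty=1$) and $y_j=e_j$. Then
\[
\sum_{i,j}a_{ij}\pair{x_i}{y_j}=\sum_{i,j}\overline{F_{ij}}F_{ij}=d^2 .
\]
For the denominator, $\norm{A}_{\infty,1}=\sup\abs{s^{T}\overline F t}\le \norm{s}_2\norm{\overline F}_{2\to2}\norm{t}_2\le \sqrt d\cdot\sqrt d\cdot\sqrt d=d^{3/2}$, since $F/\sqrt d$ is unitary. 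Hence the ratio is at least $\sqrt d$, which gives $B_d^\C\ge1$.

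\emph{Real case.} Let $h=h_+(d)$, let $H$ be an $h\times h$ Hadamard matrix, and let $A$ be its top-left $d\times d$ corner. Take $x_i=(H_{i1},\dots,H_{id})\in\{\pm1\}^d$ and $y_j=e_j$. Then the numerator is $\sum_{i,j}H_{ij}^2=d^2$. For the denominator, $A$ is a compression of $H$, so $\norm{A}_{2\to2}\le\norm H_{2\to2}=\sqrt h$, giving $\norm A_{\infty,1}\le d\sqrt h$. Thus
\[
\G_\infty(d;d,d)\ge \frac{d^2}{d\sqrt h}=\frac{d}{\sqrt h}=\sqrt d\,\sqrt{\frac{d}{h}},
\]
which is exactly $\sqrt d\,\beta_d^\R$.

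\textbf{Rate.} It remains to show $h_+(d)-d=O(d^{1/6})$. This is the main obstacle; I would not prove it but invoke the known gap bounds for Hadamard orders \cite{Livinskyi2012,BrentOsbornSmith2016}, which state that for large $d$ there is a Hadamard order in $[d,d+Cd^{1/6}]$. Then
\[
1-\sqrt{\frac{d}{h_+(d)}}\le 1-\frac{d}{h_+(d)}=\frac{h_+(d)-d}{h_+(d)}=O\bigl(d^{-5/6}\bigr),
\]
with small $d$ absorbed into the constant.

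\textbf{Combination.} Inequality \eqref{eq:uniform-rectangular} follows by inserting $B_d\ge\beta_d$ into \eqref{eq:main-bottleneck} of Theorem~\ref{thm:A}. The exactness statements are immediate: $\beta^\C_d=1$, and $h_+(d)=d$ when $d$ is a Hadamard order.

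For \eqref{eq:rectangular-order}, use that $\kappa_G(d)\le K_G^{\K}$ by \eqref{eq:kappaGd-limit}. On the other side, $\beta_d^\R$ is bounded below uniformly in $d$: for instance $h_+(d)\le 2d$ via Sylvester orders $2^k$, so $\beta_d^\R\ge 1/\sqrt2$. Since none of these constants depend on $p$, the equivalence \eqref{eq:rectangular-order} holds uniformly in $r,m,n$ and $p$.
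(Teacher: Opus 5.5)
Your proposal is correct and follows essentially the same route as the paper. It uses Fourier matrices in the complex case and $d\times d$ corners of a Hadamard matrix of order $h_+(d)$, with $\norm{A}_{\infty,1}\le d\sqrt{h}$, in the real case; it then uses the cited $O(d^{1/6})$ Hadamard gap bound for the rate, Sylvester orders for $\beta_d^{\R}\ge 1/\sqrt2$, and substitution into Theorem~\ref{thm:A}. The only differences are cosmetic: you exhibit the configuration ($x_i$ the rows, $y_j=e_j$) directly instead of invoking $G_\infty^{(d)}(A)=\sum_{i,j}|a_{ij}|$ from Lemma~\ref{lem:rectangular-infinity}, and you bound the rate via $1-\sqrt{x}\le 1-x$ rather than the paper's rationalization giving $x_d/2$.
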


The effective-dimension estimate can be refined by retaining the exact
rectangular endpoints. Put
\[
k_+:=\min\{r,n\},
\qquad
k_-:=\min\{r,m\}.
\]
The next result gives lower envelopes over all intermediate endpoint
dimensions. Over the complex field, the corresponding endpoint values
also give upper envelopes by interpolation.

\begin{maintheorem}[Rectangular endpoint envelopes]\label{thm:C}
Let $r,m,n\in\N$.

\medskip
\noindent
\textup{(i)} For every $1\le p\le\infty$, over either scalar field,
\begin{equation}\label{eq:rectangular-lower-plus}
\Gamma_p^\K(r;m,n)
\ge
\sup_{1\le k\le k_+}
k^{-1/p}\mathcal R_{\K}(m,k).
\end{equation}
Equivalently,
\begin{equation}\label{eq:rectangular-lower-plus-normalized}
\Gamma_p^\K(r;m,n)
\ge
\sup_{1\le k\le k_+}
\frac{\mathcal R_{\K}(m,k)}{\sqrt{k}}\,
k^{\,1/2-1/p}.
\end{equation}

\medskip
\noindent
\textup{(ii)} For every $1\le p\le\infty$, over either scalar field,
\begin{equation}\label{eq:rectangular-lower-minus}
\Gamma_p^\K(r;m,n)
\ge
\sup_{1\le k\le k_-}
k^{-1/p'}\mathcal R_{\K}(n,k),
\end{equation}
or equivalently
\begin{equation}\label{eq:rectangular-lower-minus-normalized}
\Gamma_p^\K(r;m,n)
\ge
\sup_{1\le k\le k_-}
\frac{\mathcal R_{\K}(n,k)}{\sqrt{k}}\,
k^{\,1/p-1/2}.
\end{equation}
Consequently, for every $1\le p\le\infty$,
\begin{equation}\label{eq:rectangular-lower-envelope}
\Gamma_p^\K(r;m,n)
\ge
\max\left\{
\sup_{1\le k\le k_+}k^{-1/p}\mathcal R_{\K}(m,k),
\sup_{1\le k\le k_-}k^{-1/p'}\mathcal R_{\K}(n,k)
\right\}.
\end{equation}

\medskip
\noindent
\textup{(iii)} Suppose that $\K=\C$. If $2\le p\le\infty$, then
\begin{equation}\label{eq:rectangular-upper-plus}
\Gamma_p^\C(r;m,n)
\le
\Gamma_2^\C(r;m,n)^{2/p}
\,
\mathcal R_{\C}(m,k_+)^{\,1-2/p}.
\end{equation}
Equivalently,
\begin{equation}\label{eq:rectangular-upper-plus-ratio}
\Gamma_p^\C(r;m,n)
\le
\Gamma_2^\C(r;m,n)
\left(
\frac{\mathcal R_{\C}(m,k_+)}
     {\Gamma_2^\C(r;m,n)}
\right)^{2a(p)}.
\end{equation}

\medskip
\noindent
\textup{(iv)} Suppose that $\K=\C$. If $1\le p\le2$, then
\begin{equation}\label{eq:rectangular-upper-minus}
\Gamma_p^\C(r;m,n)
\le
\mathcal R_{\C}(n,k_-)^{\,2/p-1}
\,
\Gamma_2^\C(r;m,n)^{\,2-2/p},
\end{equation}
or, equivalently,
\begin{equation}\label{eq:rectangular-upper-minus-ratio}
\Gamma_p^\C(r;m,n)
\le
\Gamma_2^\C(r;m,n)
\left(
\frac{\mathcal R_{\C}(n,k_-)}
     {\Gamma_2^\C(r;m,n)}
\right)^{2a(p)}.
\end{equation}
\end{maintheorem}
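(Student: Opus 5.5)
We prove Theorem~\ref{thm:C} using the exact endpoint compression $\Gamma_\infty^\K(r;m,n)=\mathcal R_\K(m,\min\{r,n\})$ and its dual $\Gamma_1^\K(r;m,n)=\mathcal R_\K(n,\min\{r,m\})$, both available earlier in the paper. The lower bounds come from restricting the vectors to $k$ coordinates and comparing $\ell_p^k$ with $\ell_\infty^k$. The upper bounds come from complex interpolation of the bilinear form $(x,y)\mapsto\sum a_{ij}\pair{x_i}{y_j}$.

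\emph{Part (i).} Fix $1\le k\le k_+$. Since $k\le r$, the values $G_q^{(k)}(A)$ embed into $G_q^{(r)}(A)$ by zero padding. Since $k\le n$, we have $\Gamma_\infty(k;m,n)=\mathcal R_\K(m,k)$. Hence we may pick $A\ne0$ and vectors $x_i\in B_{\ell_\infty^k}$, $y_j\in B_{\ell_1^k}$ whose value nearly attains $\mathcal R_\K(m,k)\norm{A}_{\infty,1}$. Rescale them to $\tilde x_i=k^{-1/p}x_i$ and $\tilde y_j=y_j$. Then $\norm{\tilde x_i}_p\le k^{-1/p}k^{1/p}\norm{x_i}_\infty\le1$ and $\norm{\tilde y_j}_{p'}\le\norm{y_j}_1\le1$. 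The rescaled vectors are admissible for $G_p^{(k)}(A)\le G_p^{(r)}(A)$, and the value of the form is multiplied by $k^{-1/p}$. This gives \eqref{eq:rectangular-lower-plus}. The normalized form \eqref{eq:rectangular-lower-plus-normalized} is just $k^{-1/p}=k^{-1/2}k^{1/2-1/p}$.

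\emph{Part (ii).} Here the roles of the two families are swapped. We start from the $p=1$ endpoint $\Gamma_1(k;m,n)=\mathcal R_\K(n,k)$ for $k\le k_-$, realized by $x_i\in B_{\ell_1^k}$ and $y_j\in B_{\ell_\infty^k}$. We keep $x_i$, which already lies in $B_{\ell_p^k}$ since $\norm{\cdot}_p\le\norm{\cdot}_1$, and scale $y_j$ by $k^{-1/p'}$. Alternatively, one can transpose $A$, which exchanges $p\leftrightarrow p'$ and $m\leftrightarrow n$, and apply (i). Taking the maximum of (i) and (ii) gives \eqref{eq:rectangular-lower-envelope}.

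\emph{Parts (iii) and (iv).} Fix $A$ and $r$ and consider the bilinear map
\[
T_A:\ \ell_q^m(\ell_q^r)\times \ell_\infty^n(\ell_{q'}^r)\to\C .
\]
Because the supremum over $y_j\in B_{\ell_{q'}^r}$ is the same as the norm on $\ell_\infty^n(\ell_{q'}^r)$, the norm of $T_A$ equals $G_q^{(r)}(A)$. Use the $\ell_\infty^m(\ell_q^r)$ norm on the $x$-side as well. The quantity $G_q^{(r)}(A)$ is then the norm of $T_A$ on $\ell_\infty^m(\ell_q^r)\times\ell_\infty^n(\ell_{q'}^r)$.

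Complex interpolation of mixed-norm spaces gives
\[
[\ell_\infty^m(\ell_2^r),\ell_\infty^m(\ell_\infty^r)]_\theta=\ell_\infty^m(\ell_p^r),
\qquad 1/p=(1-\theta)/2 ,
\]
and dually $[\ell_\infty^n(\ell_2^r),\ell_\infty^n(\ell_1^r)]_\theta=\ell_\infty^n(\ell_{p'}^r)$, so $\theta=1-2/p$. The bilinear interpolation theorem then yields
\[
G_p^{(r)}(A)\le G_2^{(r)}(A)^{2/p}\,G_\infty^{(r)}(A)^{1-2/p}.
\]
The endpoint bounds are $G_2^{(r)}(A)\le\Gamma_2\norm{A}_{\infty,1}$ and $G_\infty^{(r)}(A)\le\mathcal R_\C(m,k_+)\norm{A}_{\infty,1}$, the latter by the compression formula. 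Inserting them and taking the supremum over $A$ gives \eqref{eq:rectangular-upper-plus}. Part (iv) follows the same way, interpolating between $p=1$ and $p=2$ with the $p=1$ compression $\mathcal R_\C(n,k_-)$, or simply by transposition from (iii).

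The ratio forms \eqref{eq:rectangular-upper-plus-ratio} and \eqref{eq:rectangular-upper-minus-ratio} are algebraic rewrites, since $1-2/p=2a(p)$ for $p\ge2$ and $2/p-1=2a(p)$ for $p\le2$.

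The main obstacle is justifying the interpolation step precisely. The $\ell_\infty$-sums are not the issue: $\ell_\infty$ of a finite family commutes with complex interpolation. The delicate point is that the pairing is bilinear rather than sesquilinear. One has to check that the interpolation of $\ell_2^r$ with $\ell_\infty^r$, and of $\ell_2^r$ with $\ell_1^r$, stays compatible under the bilinear duality at every $\theta$. This is also why (iii) and (iv) are stated only over $\C$: complex interpolation fails in the real case.
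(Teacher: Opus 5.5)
Your proposal is correct and follows the paper's proof. For (i)--(ii) it rescales one vector family by $k^{-1/p}$ (or $k^{-1/p'}$), then uses padding and the endpoint compression. For (iii)--(iv) it complex-interpolates $G_p^{(r)}(A)$ between $p=2$ and the compressed endpoint; the paper interpolates the linear operator $T_A^{(p)}:\ell_\infty^m(\ell_p^r)\to\ell_1^n(\ell_p^r)$ rather than the bilinear form, which is an inessential variant.

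The obstacle you flag is not real. Complex-bilinearity is exactly what makes $z\mapsto T_A(f(z),g(z))$ analytic in Calder\'on's bilinear interpolation theorem. The identifications $[\ell_2^r,\ell_\infty^r]_\theta=\ell_p^r$ and $[\ell_2^r,\ell_1^r]_\theta=\ell_{p'}^r$ hold as normed spaces, so there is no duality compatibility to check.
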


The lower bounds in Theorem~\ref{thm:C} \textcolor{black}{refine} the square
lower bound used in Theorem~\ref{thm:A}. For $p\ge2$, the latter is
recovered from \eqref{eq:rectangular-lower-plus} by choosing
$k=d=\min\{r,m,n\}$; for $p\le2$, it is recovered from
\eqref{eq:rectangular-lower-minus} with the same choice. The stronger
form \eqref{eq:rectangular-lower-envelope} shows that both rectangular
endpoint mechanisms are available simultaneously for every $p$. The
optimizing intermediate dimension need not equal $d$.

Similarly, the complex upper bounds retain the exact rectangular
endpoint rather than replacing it immediately by its universal
effective-dimensional estimate. Since Theorem~\ref{thm:A} at
$p=\infty$ gives
\[
\mathcal R_{\C}(m,k_+)
=
\Gamma_\infty^\C(r;m,n)
\le
\sqrt d\,\Gamma_2^\C(r;m,n),
\]
formula \eqref{eq:rectangular-upper-plus} always implies the upper
bound of Theorem~\ref{thm:A}; \textcolor{black}{for $2<p\le\infty$, it is strictly sharper whenever the
rectangular endpoint is smaller than the universal estimate.}
\textcolor{black}{The dual statement holds for $1\le p<2$.}

As a consequence of Theorems~\ref{thm:A} and \ref{thm:B}, the
fixed-$p$ dimensional exponent is immediate.

\begin{corollary}[Fixed-$p$ asymptotics]
\label{cor:fixed-p-asymptotic}
Fix $1\le p\le\infty$ with $p\ne2$. If
\[
d=\min\{r,m,n\}\longrightarrow\infty,
\]
then
\begin{equation}\label{eq:visual-asymptotic}
\Gamma_p^\K(r;m,n)
=
d^{\,|1/p-1/2|+o(1)}.
\end{equation}
Equivalently,
\begin{equation}\label{eq:visual-log-asymptotic}
\lim_{d\to\infty}
\frac{\log\Gamma_p^\K(r;m,n)}{\log d}
=
\abs{\frac1p-\frac12}.
\end{equation}
\end{corollary}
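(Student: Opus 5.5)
The plan is to sandwich $\log\Gamma_p^\K(r;m,n)$ between two expressions of the form $a(p)\log d+O(1)$ by taking logarithms in \eqref{eq:uniform-rectangular} of Theorem~\ref{thm:B}. With $a(p)=|1/p-1/2|$ and $d=\min\{r,m,n\}$, that inequality reads
\[
\beta_d^{\K}\,d^{a(p)}\le\Gamma_p^\K(r;m,n)\le\kappa_G^{\K}(d)\,d^{a(p)} .
\]
After dividing by $\log d$ (for $d\ge2$), it will suffice to show that $\log\beta_d^{\K}/\log d\to0$ and $\log\kappa_G^{\K}(d)/\log d\to0$ as $d\to\infty$. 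Since these quantities depend only on $d$, the limit holds uniformly over all triples $(r,m,n)$ with $\min\{r,m,n\}=d$. This is precisely what \eqref{eq:visual-log-asymptotic} means for a limit indexed by $d$.

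For the upper factor, I will use that $\kappa_G^{\K}(d)$ is nondecreasing with $\kappa_G^{\K}(d)\uparrow K_G^{\K}<\infty$, as recorded in \eqref{eq:kappaGd-limit}. Hence $1\le\kappa_G^{\K}(d)\le K_G^{\K}$. The lower bound $1$ comes from taking $m=n=1$ (the scalar case), or simply from $\kappa_G\ge\Gamma_2\ge1$ via $x_i=y_j=e_1$. Therefore $0\le\log\kappa_G^{\K}(d)\le\log K_G^{\K}$, which is a bounded quantity, and dividing by $\log d$ gives a term tending to $0$.

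For the lower factor, $\beta_d^{\C}=1$. In the real case, \eqref{eq:beta-rate} gives $1-\beta_d^{\R}=O(d^{-5/6})$, so $\beta_d^{\R}\to1$, and in particular $\beta_d^{\R}\ge1/2$ for large $d$. As a cruder alternative that avoids the Hadamard gap bound, the Sylvester orders give $h_+(d)<2d$, hence $\beta_d^{\R}>1/\sqrt2$ for every $d$. Either way $\log\beta_d^{\K}$ is bounded, so $\log\beta_d^{\K}/\log d\to0$.

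Combining the two estimates yields
\[
a(p)+\frac{\log\beta_d^{\K}}{\log d}\le\frac{\log\Gamma_p^\K(r;m,n)}{\log d}\le a(p)+\frac{\log\kappa_G^{\K}(d)}{\log d},
\]
and both error terms are $O(1/\log d)=o(1)$. This gives \eqref{eq:visual-log-asymptotic}, and exponentiating gives \eqref{eq:visual-asymptotic}. The hypothesis $p\neq2$ is not actually used in the argument; it only ensures the exponent is nonzero. There is no real obstacle in this proof. The only point requiring care is that the $o(1)$ must be uniform over the triples, and this holds because all correction factors depend on $d$ alone.
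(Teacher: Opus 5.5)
Your proposal is correct and follows the paper's proof: take logarithms in the sandwich $\beta_d^{\K}d^{a(p)}\le\Gamma_p^\K(r;m,n)\le\kappa_G^{\K}(d)d^{a(p)}$ from Theorem~\ref{thm:B}, and observe that $\log\beta_d^{\K}$ and $\log\kappa_G^{\K}(d)$ are bounded functions of $d$ alone. Their quotients by $\log d$ therefore vanish uniformly over all triples with $\min\{r,m,n\}=d$. Your Sylvester remark giving $\beta_d^{\R}>1/\sqrt2$ also matches the paper's note that the qualitative statement does not need the Hadamard gap estimate.
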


The next result describes the case in which the exponent depends on the dimensions.

\begin{maintheorem}\label{thm:D}
Let $(r_N)$, $(m_N)$ and $(n_N)$ be sequences in $\N$ such
that
\[
d_N:=\min\{r_N,m_N,n_N\}\longrightarrow\infty,
\]
and let $p_N\in[1,\infty]$. Put
\begin{equation}\label{eq:tauN}
\tau_N:=
\abs{\frac1{p_N}-\frac12}\log d_N.
\end{equation}
Then the following statements hold.

\begin{enumerate}[label=\textup{(\roman*)}]
\item The constants stay uniformly bounded precisely when
$(\tau_N)$ is bounded:
\[
\sup_N\G_{p_N}^{\K}(r_N;m_N,n_N)<\infty
\quad\Longleftrightarrow\quad
\sup_N\tau_N<\infty.
\]

\item If $\tau_N\to0$, then
\begin{equation}\label{eq:subcritical-KG}
\G_{p_N}^{\K}(r_N;m_N,n_N)
\longrightarrow K_G^{\K}.
\end{equation}

\item If $\tau_N\to\infty$, then
\begin{equation}\label{eq:supercritical-log}
\frac{\log\G_{p_N}^{\K}(r_N;m_N,n_N)}
{\tau_N}
\longrightarrow1.
\end{equation}

\item Suppose that $\tau_N\to\tau\in[0,\infty)$, and write
\[
\underline B_{\K}:=\liminf_{d\to\infty}B_d^{\K}.
\]
Then
\begin{equation}\label{eq:critical-envelope}
\max\{K_G^{\K},e^\tau\underline B_{\K}\}
\le
\liminf_N\G_{p_N}^{\K}(r_N;m_N,n_N)
\le
\limsup_N\G_{p_N}^{\K}(r_N;m_N,n_N)
\le K_G^{\K}e^\tau.
\end{equation}
Moreover, $\underline B_{\K}\ge1$.
\end{enumerate}

Thus uniform boundedness is equivalent to
\[
\abs{\frac1{p_N}-\frac12}
=O\!\left(\frac1{\log d_N}\right).
\]
More specifically, the transition window $\tau_N\asymp1$
corresponds to
\begin{equation}\label{eq:critical-scale}
\abs{\frac1{p_N}-\frac12}
\asymp\frac1{\log d_N}.
\end{equation}
When $p_N\to2$, the latter condition is equivalent to
$|p_N-2|\asymp1/\log d_N$.
\end{maintheorem}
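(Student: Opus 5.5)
We deduce Theorem~\ref{thm:D} entirely from the two-sided bound \eqref{eq:uniform-rectangular} of Theorem~\ref{thm:B}, from \eqref{eq:kappaGd-limit}, and from the trivial comparison $\Gamma_2^\K(r;m,n)\le\Gamma_{p}^\K(r;m,n)$ when $\Gamma_2$ and the fixed-$p$ constants are compared through the interpolation/monotonicity mechanism. The basic identity is
\[
d_N^{a(p_N)}=e^{\tau_N}.
\]
So \eqref{eq:uniform-rectangular} reads
\[
\beta_{d_N}^\K e^{\tau_N}\le\Gamma_{p_N}^\K(r_N;m_N,n_N)\le\kappa_G^\K(d_N)e^{\tau_N}\le K_G^\K e^{\tau_N}.
\]
Since $\beta_d^\K\to1$ by \eqref{eq:beta-rate}, part (i) is immediate: boundedness of the constants forces $e^{\tau_N}$ to be bounded, and conversely. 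Part (iii) follows by taking logarithms and dividing by $\tau_N\to\infty$, because $\log\beta_{d_N}^\K\to0$ and $\log K_G^\K$ is fixed.

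For the lower bound $K_G^\K$ in (ii) and (iv), we need an estimate that does not degrade with $p$. The natural choice is the Hilbertian square subproblem at $p=2$. From the middle inequality in \eqref{eq:main-bottleneck} we obtain only an upper bound, so a separate lower comparison is required. I would try to prove
\[
\Gamma_p^\K(r;m,n)\ge d^{-a(p)}\,\Gamma_2^\K(d;m,n).
\]
The route is as follows. Given unit vectors $x_i,y_j\in\ell_2^d$ embedded in $\ell_2^r$, note that $\|x\|_p\le d^{a(p)}\|x\|_2$ or $\|y\|_{p'}\le d^{a(p)}\|y\|_2$ on $\ell^d$ for the appropriate side. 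We then rescale one family by $d^{-a(p)}$, which is the standard norm comparison on a $d$-dimensional coordinate space; this gives $G_p^{(r)}(A)\ge d^{-a(p)}G_2^{(d)}(A)$.

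Combining this with \eqref{eq:Hilbert-saturation} and the definition \eqref{eq:kappaGd}, we get
\[
\liminf\Gamma_{p_N}\ge\liminf e^{-\tau_N}\,\Gamma_2^\K(d_N;m_N,n_N).
\]
Since $m_N,n_N\ge d_N$, zero padding shows that $\Gamma_2^\K(d;m,n)$ is nondecreasing in $m,n$, so $\Gamma_2^\K(d_N;m_N,n_N)\ge\Gamma_2^\K(d_N;d_N,d_N)$.

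Here lies the main obstacle. We need $\Gamma_2^\K(d;d,d)\to K_G^\K$, whereas \eqref{eq:kappaGd-limit} only gives $\sup_{m,n}\Gamma_2^\K(d;m,n)\to K_G^\K$. I would close the gap by using \eqref{eq:Hilbert-saturation} in the other direction. For fixed $k$ and any $m,n$, take $A$ nearly optimal for $\kappa_G^\K(k)$ with large $m,n$; the supremum over all $(m,n)$ is already attained with Hilbert dimension stabilizing. Then, for $d\ge\max(m,n)$, we have
\[
\Gamma_2^\K(d;d,d)\ge G_2^{(d)}(A)/\|A\|_{\infty,1}\ge G_2^{(k)}(A)/\|A\|_{\infty,1},
\]
again by padding. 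Hence $\liminf_d\Gamma_2^\K(d;d,d)\ge\kappa_G^\K(k)$ for every $k$, and letting $k\to\infty$ gives $K_G^\K$.

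With this in hand, (ii) follows: $e^{-\tau_N}\to1$ on the lower side, and $\kappa_G^\K(d_N)e^{\tau_N}\le K_G^\K e^{\tau_N}\to K_G^\K$ on the upper side. In (iv), the lower bound $e^\tau\underline B_\K$ comes from the first inequality of \eqref{eq:main-bottleneck}. The bound $K_G^\K$ comes from the argument above only if $e^{-\tau}$ is absorbed, so I would instead obtain it from $\Gamma_p\ge\Gamma_2$-type monotonicity. If that fails, I would settle for $K_G^\K e^{-\tau}$ and recheck. The upper bound is as before. Finally, $\underline B_\K\ge1$ follows from Theorem~\ref{thm:B} and $\beta_d^\K\to1$.
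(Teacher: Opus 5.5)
\textbf{Gap in the $K_G^{\K}$ lower bound of (iv).} Your treatment of (i) and (iii), of all the upper bounds, of the lower bound $e^\tau\underline B_{\K}$, and of $\underline B_{\K}\ge1$ agrees with the paper. Your argument for (ii) is also valid: padding and the coordinate norm comparison give $\Gamma_{p_N}^{\K}(r_N;m_N,n_N)\ge e^{-\tau_N}\Gamma_2^{\K}(d_N;d_N,d_N)$, and this tends to $K_G^{\K}$ when $\tau_N\to0$.

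The problem is (iv). You carry out the comparison at the growing dimension $d_N$, where it costs exactly $d_N^{a(p_N)}=e^{\tau_N}\to e^{\tau}$. So it only yields $e^{-\tau}K_G^{\K}$, which is strictly weaker than the claim for $\tau>0$.

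The replacement you suggest is a monotonicity $\Gamma_p^{\K}\ge\Gamma_2^{\K}$, which you already call ``trivial'' in your first sentence. It is never proved. The paper obtains such a monotonicity only in Proposition~\ref{prop:complex-profile}, from the evenness and convexity of $F_d$, and that rests on complex interpolation and the square symmetry $m=n$. Over $\C$ you could combine it with padding down to $(d_N;d_N,d_N)$. Over $\R$ nothing of the sort is available, and the theorem is asserted for both fields. So (iv) is not established as written, and your fallback $K_G^{\K}e^{-\tau}$ would not prove \eqref{eq:critical-envelope}.

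\textbf{The fix.} In (iv) you still have $a(p_N)=\tau_N/\log d_N\to0$, i.e.\ $p_N\to2$. So the loss from the norm comparison should be paid at a \emph{fixed} dimension rather than at $d_N$. Fix $k$ and take all $N$ with $d_N\ge k$. Padding and your own rescaling give $\Gamma_{p_N}^{\K}(r_N;m_N,n_N)\ge\Gamma_{p_N}^{\K}(k;k,k)\ge k^{-a(p_N)}\Gamma_2^{\K}(k;k,k)$, and $k^{-a(p_N)}\to1$. Hence the $\liminf$ is at least $\Gamma_2^{\K}(k;k,k)$ for every $k$, and your argument that $\Gamma_2^{\K}(k;k,k)\to K_G^{\K}$ finishes the proof.

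This is exactly the paper's Lemma~\ref{lem:fixed-Hilbert-extremizers}: a fixed near-extremal configuration in $\ell_2^{r_0}$ is rescaled by $r_0^{\alpha_N}\to1$. It treats (ii) and (iv) at once and needs only $p_N\to2$, not $\tau_N\to0$.

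You also omit the closing equivalence with $|p_N-2|\asymp1/\log d_N$. That is just $1/p-1/2=(2-p)/(2p)$ with $2p_N\to4$.
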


We also give an interpretation of the endpoint in terms of tensor norm ratios and summing operators.

 For an operator $T:X\to Y$, we denote by
\[
 \pi_1^{(d)}(T):=\sup\left\{\sum_{j=1}^d\norm{Tx_j}:
 \sup_{x^*\in B_{X^*}}\sum_{j=1}^d\abs{x^*(x_j)}\le1\right\}
\]
the restricted $1$-summing norm with $d$ vectors, and $I_X:X\to X$ denotes the identity operator.

\begin{proposition}\label{prop:endpoint}
For both scalar fields,
\begin{equation}\label{eq:endpoint-rho}
 \G_\infty^{\K}(d;d,d)
=\rho(\ell_1^d(\K),\ell_1^d(\K)),
\end{equation}
For $d\ge1$ set
\begin{equation}\label{eq:Md-def}
 M_d:=2^{-d}\sum_{\varepsilon\in\{-1,1\}^d}
 \abs{\varepsilon_1+\cdots+\varepsilon_d}.
\end{equation}
Over $\R$,
\begin{equation}\label{eq:endpoint-real}
 B_d^{\R}
 =\frac1{\sqrt d}\pi_1^{(d)}(I_{\ell_1^d})
 \le
 \frac{\sqrt d}{M_d},
\end{equation}
and therefore
\begin{equation}\label{eq:endpoint-real-limsup}
 \limsup_{d\to\infty}B_d^{\R}
 \le\sqrt{\frac\pi2}.
\end{equation}
Over $\C$,
\begin{equation}\label{eq:endpoint-complex}
 1\le B_d^{\C}\le\frac2{\sqrt\pi}.
\end{equation}
\end{proposition}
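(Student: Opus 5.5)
We prove Proposition~\ref{prop:endpoint} by first identifying the numerator at $p=\infty$ and then bounding the resulting tensor ratio.

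\textbf{Step 1: the numerator at $p=\infty$.} Take $x_i\in B_{\ell_\infty^d}$ and $y_j\in B_{\ell_1^d}$. For fixed $y_j$, the supremum over each $x_i$ is attained coordinatewise:
\[
G_\infty^{(d)}(A)=\sup_{y_j\in B_{\ell_1^d}}\sum_{i=1}^m\sum_{k=1}^d\Bigl|\sum_j a_{ij}y_j(k)\Bigr| .
\]
The right-hand side is convex in each $y_j$. Hence we may restrict $y_j$ to extreme points $\lambda e_{k(j)}$ with $|\lambda|=1$. This gives the partition formula
\[
G_\infty^{(d)}(A)=\sup_{\lambda}\ \sup_{\text{partitions } \{1,\dots,n\}=S_1\cup\dots\cup S_d}\ \sum_{i}\sum_{k}\Bigl|\sum_{j\in S_k}a_{ij}\lambda_j\Bigr| .
\]
Next, identify $A$ with $z=\sum_{i,j}a_{ij}e_i\otimes e_j\in\ell_1^m\otimes\ell_1^n$. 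Then $\norm{A}_{\infty,1}=\norm{z}_\varepsilon$. Grouping columns by $S_k$ defines a contraction $Q\colon\ell_1^n\to\ell_1^d$, $e_j\mapsto \lambda_je_{k(j)}$. Moreover $\sum_i\sum_k|\cdot|=\norm{(I\otimes Q)z}_{\ell_1^{m}\otimes_\pi\ell_1^d}$, because $\ell_1\otimes_\pi\ell_1=\ell_1^{md}$ isometrically.

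\textbf{Step 2: the ratio $\rho$.} For the upper bound $\Gamma_\infty\le\rho$, we use $\norm{(I\otimes Q)z}_\varepsilon\le\norm z_\varepsilon$, so $G_\infty^{(d)}(A)\le\rho(\ell_1^m,\ell_1^d)\norm{A}_{\infty,1}$. For the reverse inequality, any $w\in\ell_1^m\otimes\ell_1^d$ is itself an $m\times d$ matrix. Choosing the identity partition gives $G_\infty^{(d)}(w)\ge\norm w_\pi$. Taking $m=n=d$ yields \eqref{eq:endpoint-rho}.

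\textbf{Step 3: real case.} Write $w=\sum_k u_k\otimes e_k$ with $u_k\in\ell_1^d$. Then $\norm w_\pi=\sum_k\norm{u_k}_1$ and $\norm w_\varepsilon=\sup_{\|x^*\|_\infty\le1}\sum_k|x^*(u_k)|$. So $\rho(\ell_1^d,\ell_1^d)$ is exactly $\pi_1^{(d)}(I_{\ell_1^d})$, which gives the equality in \eqref{eq:endpoint-real}.

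For the upper bound $d/M_d$, fix $u_1,\dots,u_d$ and average over $x^*=\varepsilon\in\{\pm1\}^d$. Coordinatewise,
\[
\mathbb E\sum_k|\pair{\varepsilon}{u_k}|\ \ge\ \frac{M_d}{d}\sum_k\norm{u_k}_1 .
\]
This is the main obstacle. It asks that, for a vector $v$ with $\norm v_1=1$, we have $\mathbb E|\sum_i\varepsilon_iv_i|\ge M_d/d$. By convexity and symmetrization, the function $v\mapsto\mathbb E|\pair\varepsilon v|$ is convex and invariant under permutations and sign changes. Its minimum on the $\ell_1$ sphere restricted to $v\ge0$ is therefore attained at the barycenter $v=(1/d,\dots,1/d)$. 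This uses Jensen's inequality over the symmetry group, and the value there is $M_d/d$.

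It follows that $\pi_1^{(d)}\le d/M_d$, hence $B^\R_d\le\sqrt d/M_d$. The central limit theorem gives $M_d/\sqrt d\to\sqrt{2/\pi}$, which yields \eqref{eq:endpoint-real-limsup}.

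\textbf{Step 4: complex case.} The lower bound $1\le B_d^{\C}$ is Theorem~\ref{thm:B}, obtained from the Fourier matrix. For the upper bound, repeat Step 3 with Steinhaus variables $\omega_i$ uniform on the circle in place of the signs. By the same symmetry and convexity argument, the minimizer is again the flat vector, giving
\[
\mathbb E\Bigl|\sum_{i=1}^d\omega_iv_i\Bigr|\ \ge\ \frac1d\,\mathbb E\Bigl|\sum_{i=1}^d\omega_i\Bigr| .
\]
Hence $B_d^\C\le\sqrt d\big/\mathbb E|\sum_{i=1}^d\omega_i|$. It remains to check that $\mathbb E|\sum_{i=1}^d\omega_i|\ge\frac{\sqrt{\pi d}}2$ for every $d$, not only asymptotically. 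The limit $\sqrt{\pi d}/2$ comes from the complex Gaussian. We plan to prove the uniform inequality via the known monotonicity of $\mathbb E|\sum\omega_i|/\sqrt d$ in $d$, or alternatively via Haagerup-type explicit estimates. This last point is the second delicate step.
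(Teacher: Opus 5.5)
Steps 1--3 are correct, but Step 4 leaves the complex upper bound $B_d^{\C}\le 2/\sqrt\pi$ unproved.

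\textbf{The real case (Steps 1--3).} Here your route is more self-contained than the paper's. The paper cites Aubrun--Lami--Palazuelos--Szarek--Winter both for $\rho(\ell_1^d,X)=\pi_1^{(d)}(I_X)$ and for $\pi_1(I_{\ell_1^d})=d/M_d$. It then evaluates $M_d$ in closed form and applies Stirling. You instead do three things:
\begin{itemize}
\item you read off $\rho(\ell_1^d,\ell_1^d)=\pi_1^{(d)}(I_{\ell_1^d})$ directly from $\ell_1^d\otimes_\pi\ell_1^d=\ell_1^{d^2}$;
\item you prove $\pi_1(I_{\ell_1^d})\le d/M_d$ by Rademacher averaging plus Jensen over the signed permutation group (a norm invariant under permutations and sign changes is minimized on the $\ell_1$-sphere at the flat vector);
\item you get the asymptotics from the central limit theorem. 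This is legitimate: with $S_d=\varepsilon_1+\cdots+\varepsilon_d$, the identity $\mathbb E(S_d/\sqrt d)^2=1$ gives uniform integrability.
\end{itemize}
You obtain only one direction of the cited formula, but that is all the proposition uses.

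\textbf{The gap in Step 4.} Your symmetrization is correct and gives $B_d^{\C}\le\sqrt d\,/\,\mathbb E\abs{\omega_1+\cdots+\omega_d}$. What remains is $\mathbb E\abs{\omega_1+\cdots+\omega_d}\ge\frac{\sqrt{\pi d}}{2}$ for \emph{every} $d$, which is the heart of the upper bound in \eqref{eq:endpoint-complex}, and you do not prove it.

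Your first suggested route fails because the ratio $\mathbb E\abs{\omega_1+\cdots+\omega_d}/\sqrt d$ is not monotone in $d$. For $d=2$ it equals $4/(\pi\sqrt2)\approx0.900$. For $d=3$ it is about $1.5746/\sqrt3\approx0.909$. So you cannot pass from the Gaussian limit $\sqrt\pi/2$ to each fixed $d$.

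Elementary moment comparisons are too weak as well. Hölder with $\mathbb E\abs{\omega_1+\cdots+\omega_d}^2=d$ and $\mathbb E\abs{\omega_1+\cdots+\omega_d}^4=2d^2-d$ only yields a constant tending to $1/\sqrt2<\sqrt\pi/2$.

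Your second route, ``Haagerup-type estimates'', amounts to reproving Sawa's theorem. That is the sharp $L_1$ Khintchine inequality for Steinhaus variables, $\mathbb E\abs{\sum_j a_j\omega_j}\ge\frac{\sqrt\pi}{2}\bigl(\sum_j\abs{a_j}^2\bigr)^{1/2}$, which is exactly what the paper invokes. With $a=(1,\dots,1)$ it is precisely your missing inequality, so you should cite it rather than leave it as a plan.

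Once you use Sawa, the symmetrization step is no longer needed. The paper integrates $\norm{Az}_1\le\norm{A}_{\infty,1}$ over $\mathbb T^d$ against Haar measure. It applies Sawa's inequality row by row and finishes with $\sum_j\abs{a_{ij}}\le\sqrt d\,\bigl(\sum_j\abs{a_{ij}}^2\bigr)^{1/2}$.
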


In the complex case, complex interpolation gives an additional result for the square case. For $d\ge2$ put $L_d=\frac12\log d$ and, for $|s|\le L_d$, define
\begin{equation}\label{eq:critical-p-complex}
 \frac1{p_d(s)}=\frac12-\frac{s}{\log d},
 \qquad
 F_d(s):=
 \log\G_{p_d(s)}^{\C}(d;d,d).
\end{equation}

\begin{proposition}\label{prop:complex-profile}
For every $d\ge2$, the function $F_d$ is even, convex and $1$-Lipschitz on $[-L_d,L_d]$. For $0\le s\le L_d$ set
\[
 H_d(s):=F_d(s)-s.
\]
Then $H_d$ is convex and nonincreasing, with
\[
 H_d(0)=\log\G_2^{\C}(d;d,d),
 \qquad
 H_d(L_d)=\log B_d^{\C}.
\]
Therefore,
\begin{align}
 \max\{\log\G_2^{\C}(d;d,d),\,|s|+\log B_d^{\C}\}
 &\le F_d(s)\label{eq:profile-lower}\\
 &\le |s|
 +\left(1-\frac{2|s|}{\log d}\right)
   \log\G_2^{\C}(d;d,d)
 +\frac{2|s|}{\log d}\log B_d^{\C}.
 \label{eq:profile-upper}
\end{align}
Every sequence $d_k\to\infty$ has a subsequence along which $F_{d_k}$ converges locally uniformly on $\R$ and $B_{d_k}^{\C}\to B\in[1,2/\sqrt\pi]$. Every such limit satisfies
\begin{equation}\label{eq:profile-limit}
 \max\{\log K_G^{\C},|s|+\log B\}
 \le F(s)
 \le \log K_G^{\C}+|s|.
\end{equation}
\end{proposition}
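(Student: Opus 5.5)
Since $p_d(-s)$ is the H\"older conjugate of $p_d(s)$, I will first show that $F_d$ is even. This should follow from the duality $\G_p(r;m,n)=\G_{p'}(r;n,m)$, which comes from transposing $A$ (this swaps the roles of the $x_i$ and $y_j$), together with the square shape $m=n=d$.

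For convexity I will use complex interpolation. The parameter $\theta=1/p$ enters affinely through $s$. For fixed $A$, I will write the numerator $G_p^{(d)}(A)$ as the norm of the bilinear map $(x,y)\mapsto\sum a_{ij}\pair{x_i}{y_j}$ on $\ell_\infty^m(\ell_p^d)\times\ell_\infty^n(\ell_{p'}^d)$. Equivalently, it is the norm of the induced operator $\ell_\infty^m(\ell_p^d)\to \ell_1^n(\ell_p^d)$, since $(\ell_\infty^n(\ell_{p'}^d))^*=\ell_1^n(\ell_p^d)$ in finite dimensions. Both spaces form complex interpolation scales in $\theta$: $[\ell_\infty(\ell_{p_0}),\ell_\infty(\ell_{p_1})]_\theta=\ell_\infty(\ell_{p_\theta})$, and likewise for $\ell_1$. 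The Riesz--Thorin/Stein bilinear interpolation theorem then makes $\theta\mapsto\log G^{(d)}_{1/\theta}(A)$ convex on $[0,1]$. Subtracting the $A$-independent quantity $\log\norm{A}_{\infty,1}$ and taking the supremum over $A$ keeps convexity, so $F_d$ is convex in $s$. This interpolation step is the main technical point: one must check that the multiplication operator is bounded on the endpoint spaces for complex scalars. The bilinear pairing, rather than the Hermitian one, matters here because it makes the family analytic in $\theta$.

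For the Lipschitz bound I will use Theorem~\ref{thm:A}, but pointwise comparison of $G_p$ with $G_q$ via the inclusions $\ell_p^d\subset\ell_q^d$ is cleaner. For $p\le q$ one has $B_{\ell_p^d}\subset B_{\ell_q^d}$ and $B_{\ell_{p'}^d}\subset d^{1/p-1/q}\,B_{\ell_{q'}^d}$ (rescaling). Applying this in both directions gives $|F_d(s)-F_d(s')|\le|1/p-1/q|\log d=|s-s'|$. Hence $F_d$ is $1$-Lipschitz. Then $H_d=F_d-s$ is convex, and its slopes are at most $0$, so $H_d$ is nonincreasing. The endpoint values are $H_d(0)=F_d(0)=\log\G_2^\C(d;d,d)$ and, since $p_d(L_d)=\infty$, $H_d(L_d)=\log\G_\infty^\C(d;d,d)-\tfrac12\log d=\log B_d^\C$.

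The bounds \eqref{eq:profile-lower} and \eqref{eq:profile-upper} then follow for $s\ge0$, and for all $s$ by evenness. Monotonicity of $H_d$ gives $F_d(s)-s\ge H_d(L_d)$. Evenness plus convexity give $F_d(s)\ge F_d(0)$. Convexity of $H_d$ on $[0,L_d]$ gives $H_d(s)\le(1-s/L_d)H_d(0)+(s/L_d)H_d(L_d)$, which is \eqref{eq:profile-upper} because $s/L_d=2s/\log d$.

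For the limits, each $F_d$ is $1$-Lipschitz and $F_d(0)\in[0,\log K_G^\C]$. Also $L_d\to\infty$, so I extend $F_d$ outside $[-L_d,L_d]$ as a constant. Arzel\`a--Ascoli with a diagonal argument then yields a locally uniformly convergent subsequence. Proposition~\ref{prop:endpoint} gives $B_d^\C\in[1,2/\sqrt\pi]$, so passing to a further subsequence gives $B_{d_k}^\C\to B$. To obtain \eqref{eq:profile-limit} I pass to the limit in \eqref{eq:profile-lower} and \eqref{eq:profile-upper} at fixed $s$. Here $\G_2^\C(d;d,d)\to K_G^\C$, because $\G_2^\C(d;d,d)\ge\G_2^\C(d;m,n)$ for $m,n\le d$ by zero padding, this is bounded by $\kappa_G(d)$, and $\kappa_G(d)\uparrow K_G^\C$ by \eqref{eq:kappaGd-limit}. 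The weight $2|s|/\log d$ tends to $0$, so the upper bound becomes $\log K_G^\C+|s|$.
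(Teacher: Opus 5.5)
Your proposal is correct and follows the paper's proof essentially step for step. Evenness comes from duality, convexity from complex interpolation of $T_A^{(p)}$ followed by a supremum over $A$, the $1$-Lipschitz bound from $\ell_p^d$-norm comparison, the bounds from monotonicity and the chord inequality for $H_d$, and the limits from Arzel\`a--Ascoli.

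The one step to tighten is $\G_2^{\C}(d;d,d)\to K_G^{\C}$. The inequality $\G_2^{\C}(d;d,d)\ge\G_2^{\C}(d;m,n)$ for $m,n\le d$ does not by itself reach $\kappa_G^{\C}(d)$, because $\kappa_G^{\C}(d)$ is a supremum over all $m,n$. Instead, fix $(d_0;m_0,n_0)$ and pad in all three parameters: for $d\ge\max\{d_0,m_0,n_0\}$ you get $\G_2^{\C}(d;d,d)\ge\G_2^{\C}(d_0;m_0,n_0)$. Hence $\liminf_d\G_2^{\C}(d;d,d)\ge\kappa_G^{\C}(d_0)\uparrow K_G^{\C}$, which is the paper's "every fixed configuration embeds in a large square problem."
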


\textcolor{black}{The effective-dimension estimates in Theorems~\ref{thm:A} and~\ref{thm:B}, as well as the lower endpoint envelopes in Theorem~\ref{thm:C}, are independent of interpolation and apply over both scalar fields. Complex interpolation enters in the upper estimates of Theorem~\ref{thm:C} and in Proposition~\ref{prop:complex-profile}.}

\section{Basic reductions}

We begin with two elementary properties that will be used repeatedly.

\begin{lemma}\label{lem:duality}
For every $r,m,n\in\N$ and $1\le p\le\infty$,
\begin{equation}\label{eq:duality}
 \G_p^{\K}(r;m,n)
 =\G_{p'}^{\K}(r;n,m).
\end{equation}
\end{lemma}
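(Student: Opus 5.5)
The plan is to prove the identity at the level of numerators, matrix by matrix, and then take suprema. Fix $A\in\K^{m\times n}$. Let $A^{T}\in\K^{n\times m}$ be its transpose, with entries $(A^T)_{ji}=a_{ij}$. The first step is to check that the denominator does not change:
\[
\norm{A^T}_{\infty,1}=\norm{A}_{\infty,1}.
\]
This follows directly from the bilinear expression in \eqref{eq:matrix-norm}. The supremum of $\abs{\sum_{i,j}a_{ij}s_it_j}$ over $\norm{s}_\infty\le1$ and $\norm{t}_\infty\le1$ is symmetric under swapping the roles of $s$ and $t$, which is exactly the transpose.

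The second step is to show that
\[
G_p^{(r)}(A)=G_{p'}^{(r)}(A^T).
\]
Write the numerator of $G_{p'}^{(r)}(A^T)$ using vectors $u_1,\dots,u_n\in B_{\ell_{p'}^r}$ and $v_1,\dots,v_m\in B_{\ell_{p''}^r}=B_{\ell_p^r}$. Here we use $p''=p$, which holds on the whole range $1\le p\le\infty$ under the convention $1/\infty=0$. The expression being maximized is
\[
\sum_{j=1}^n\sum_{i=1}^m a_{ij}\pair{u_j}{v_i}.
\]
The pairing $\pair{x}{y}=\sum_k x_ky_k$ is the bilinear one, even over $\C$, so it is symmetric: $\pair{u_j}{v_i}=\pair{v_i}{u_j}$. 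Renaming $x_i=v_i$ and $y_j=u_j$ turns this expression into the one defining $G_p^{(r)}(A)$ in \eqref{eq:G-rmn}, and the two constraint sets correspond exactly.

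Finally, $A\mapsto A^T$ is a bijection from $\K^{m\times n}\setminus\{0\}$ onto $\K^{n\times m}\setminus\{0\}$. Taking the supremum of the ratio in \eqref{eq:Gamma-rmn} therefore gives \eqref{eq:duality}.

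There is no real obstacle here. The one point that needs explicit attention is the complex case: the symmetry $\pair{x}{y}=\pair{y}{x}$ holds because the paper uses the bilinear pairing and not the Hermitian one. A Hermitian pairing would introduce a conjugation, which would then have to be absorbed by conjugating the vectors.
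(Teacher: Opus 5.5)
Your proposal is correct and follows essentially the same route as the paper: transpose $A$, observe $\norm{A^T}_{\infty,1}=\norm{A}_{\infty,1}$ from the bilinear form, interchange the two vector families to obtain $G_p^{(r)}(A)=G_{p'}^{(r)}(A^T)$, and take suprema over the bijection $A\mapsto A^T$. You also state explicitly that the swap uses the symmetry of the bilinear (not Hermitian) pairing over $\C$, and that $p''=p$. The paper leaves both points implicit.
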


\begin{proof}
For $A\in\K^{m\times n}$, transposing $A$ and interchanging the two vector families in \eqref{eq:G-rmn} gives
\[
 G_p^{(r)}(A)=G_{p'}^{(r)}(A^T).
\]
Moreover $\norm{A}_{\infty,1}=\norm{A^T}_{\infty,1}$ by the bilinear form in \eqref{eq:matrix-norm}. Taking the supremum over $A\ne0$ proves \eqref{eq:duality}.
\end{proof}

\begin{lemma}\label{lem:padding}
If $r'\le r$, $m'\le m$ and $n'\le n$, then
\begin{equation}\label{eq:padding}
 \G_p^{\K}(r;m,n)
 \ge
 \G_p^{\K}(r';m',n').
\end{equation}
\end{lemma}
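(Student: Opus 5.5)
The plan is to embed the smaller problem into the larger one by zero padding, treating the matrix and the vectors separately, and to check that neither the numerator nor the denominator of \eqref{eq:Gamma-rmn} changes.

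First, fix $A'\in\K^{m'\times n'}$, $A'\ne0$. Let $A\in\K^{m\times n}$ be the matrix with $a_{ij}=a'_{ij}$ for $i\le m'$, $j\le n'$, and $a_{ij}=0$ otherwise. We check that $\norm{A}_{\infty,1}=\norm{A'}_{\infty,1}$ using the bilinear form in \eqref{eq:matrix-norm}. The extra coordinates $s_i$ ($i>m'$) and $t_j$ ($j>n'$) multiply only zero entries. Hence the supremum over $\norm{s}_\infty,\norm{t}_\infty\le1$ is the same as the supremum over the first $m'$ and $n'$ coordinates.

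Second, we show $G_p^{(r)}(A)\ge G_p^{(r')}(A')$. Take $x'_1,\dots,x'_{m'}\in B_{\ell_p^{r'}}$ and $y'_1,\dots,y'_{n'}\in B_{\ell_{p'}^{r'}}$. Embed each into $\K^r$ by appending $r-r'$ zero coordinates; this preserves the $\ell_p$ and $\ell_{p'}$ norms and the bilinear pairing, $\pair{x_i}{y_j}=\pair{x'_i}{y'_j}$. For the indices $i>m'$ and $j>n'$, take $x_i=0$ and $y_j=0$ (any admissible choice works, since these indices carry zero entries of $A$). Then
\[
\sum_{i,j}a_{ij}\pair{x_i}{y_j}=\sum_{i\le m',\,j\le n'}a'_{ij}\pair{x'_i}{y'_j}.
\]
Taking the supremum over the primed families gives $G_p^{(r)}(A)\ge G_p^{(r')}(A')$.

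Finally, combining the two steps,
\[
\frac{G_p^{(r)}(A)}{\norm{A}_{\infty,1}}\ge\frac{G_p^{(r')}(A')}{\norm{A'}_{\infty,1}}.
\]
The left side is at most $\G_p^{\K}(r;m,n)$, so taking the supremum over $A'\ne0$ gives \eqref{eq:padding}.

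No step is a real obstacle. The only point needing care is that zero padding preserves the norms for every $1\le p\le\infty$, including $p=\infty$ and $p'=\infty$, and that the bilinear pairing (not a Hermitian product) is unchanged over $\C$. Both are immediate coordinatewise.
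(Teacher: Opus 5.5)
Your proposal is correct and follows the same zero-padding argument as the paper: the matrix goes into the upper-left corner, and the vectors go into the first $r'$ coordinates. Your version is slightly more careful than the paper's wording, since you only claim $G_p^{(r)}(A)\ge G_p^{(r')}(A')$ for the numerator rather than that it is unchanged, and that inequality is all the lemma needs.
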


\begin{proof}
Embed an $m'\times n'$ matrix into the upper left corner of an $m\times n$ matrix and fill the remaining entries with zero. Embed the admissible vectors of $\ell_p^{r'}$ and $\ell_{p'}^{r'}$ into the first $r'$ coordinates of $\ell_p^r$ and $\ell_{p'}^r$. Both the numerator in \eqref{eq:G-rmn} and the denominator in \eqref{eq:matrix-norm} are unchanged.
\end{proof}

\section{Proof of Theorem A}

Fix an admissible configuration. The vectors $x_1,\ldots,x_m$
span a subspace of dimension at most $\min\{r,m\}$. On this subspace,
the vectors $y_j$ define bounded linear functionals. Lewis's estimate
\cite{Lewis1978} compares this subspace with a Euclidean space through
an isomorphism whose distortion is controlled by its dimension.
Duality gives the corresponding estimate after interchanging the two
vector families.

We record the estimate of Lewis in the following standard form:
if $E$ is an $s$-dimensional subspace of an $L_p$ space, then
\begin{equation}\label{eq:Lewis-distance}
d(E,\ell_2^s)\le s^{|1/p-1/2|}.
\end{equation}
\begin{lemma}\label{lem:Lewis-reduction}
Let $A\in\K^{m\times n}$ and $1\le p\le\infty$. If
$s=\min\{r,m\}$, then
\begin{equation}\label{eq:Lewis-x}
G_p^{(r)}(A)\le s^{a(p)}G_2^{(s)}(A).
\end{equation}
Likewise, if $t=\min\{r,n\}$, then
\begin{equation}\label{eq:Lewis-y}
G_p^{(r)}(A)\le t^{a(p)}G_2^{(t)}(A).
\end{equation}
\end{lemma}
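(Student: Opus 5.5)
Fix an admissible configuration $x_1,\dots,x_m\in B_{\ell_p^r}$ and $y_1,\dots,y_n\in B_{\ell_{p'}^r}$. We aim to produce vectors $u_i,v_j$ in $\ell_2^s$ with $\pair{u_i}{v_j}=\pair{x_i}{y_j}$ for all $i,j$ and $\norm{u_i}_2\norm{v_j}_2$ controlled by $s^{a(p)}$. Let $E=\operatorname{span}\{x_1,\dots,x_m\}\subseteq\ell_p^r$ and put $\dim E=s_0\le s=\min\{r,m\}$. By Lewis's estimate \eqref{eq:Lewis-distance} there is an isomorphism $T:E\to\ell_2^{s_0}$ with $\norm{T}\,\norm{T^{-1}}\le s_0^{a(p)}\le s^{a(p)}$; after rescaling we may assume $\norm{T^{-1}}=1$ and $\norm{T}\le s^{a(p)}$. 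Put $u_i:=Tx_i$, so that $\norm{u_i}_2\le s^{a(p)}$.

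For the other family, each $y_j$ defines a functional $\phi_j:=\pair{\cdot}{y_j}|_E$ of norm at most $\norm{y_j}_{p'}\le1$, by Hölder with the bilinear pairing. The functional $\phi_j\circ T^{-1}$ on $\ell_2^{s_0}$ has norm at most $1$. By the Riesz representation for the \emph{bilinear} pairing on $\ell_2^{s_0}$ (over $\C$ one takes the complex conjugate of the Riesz vector, which does not change its norm), there is $v_j\in\ell_2^{s_0}$ with $\norm{v_j}_2\le1$ and $\pair{w}{v_j}=\phi_j(T^{-1}w)$. Then $\pair{u_i}{v_j}=\phi_j(x_i)=\pair{x_i}{y_j}$. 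Consequently
\[
\abs{\sum_{i,j}a_{ij}\pair{x_i}{y_j}}=\abs{\sum_{i,j}a_{ij}\pair{u_i}{v_j}}\le s^{a(p)}G_2^{(s_0)}(A)\le s^{a(p)}G_2^{(s)}(A),
\]
where we divide the $u_i$ by $s^{a(p)}$ and use zero padding, $\ell_2^{s_0}\subseteq\ell_2^s$, for the last step. Taking the supremum over configurations gives \eqref{eq:Lewis-x}.

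For \eqref{eq:Lewis-y} we intend to apply \eqref{eq:Lewis-x} to $A^T$ with exponent $p'$, using $G_p^{(r)}(A)=G_{p'}^{(r)}(A^T)$ from the proof of Lemma~\ref{lem:duality}. Since $a(p')=a(p)$, this gives $G_p^{(r)}(A)\le t^{a(p)}G_2^{(t)}(A^T)=t^{a(p)}G_2^{(t)}(A)$, the last equality holding because the $p=2$ problem is symmetric under swapping the two families.

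The only delicate point is the exact form of Lewis's theorem. We need that a subspace of $\ell_p^r$ (an $L_p$ space) of dimension $s_0$ has Banach--Mazur distance at most $s_0^{|1/p-1/2|}$ to $\ell_2^{s_0}$, for all $1\le p\le\infty$, including the endpoints and both scalar fields; we take this as given in \eqref{eq:Lewis-distance}. If it were available only with an attained isomorphism up to $\varepsilon$, we would use $(1+\varepsilon)s^{a(p)}$ and let $\varepsilon\to0$, which causes no trouble since the bound is then taken as a supremum. Some care is also needed over $\C$ with the bilinear rather than Hermitian pairing, which the conjugation above handles.
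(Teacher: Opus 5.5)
Your proposal is correct and follows the paper's proof essentially step for step. Both arguments use Lewis's distortion bound on $E=\operatorname{span}\{x_i\}$, carry the functionals $\pair{\cdot}{y_j}$ through the isomorphism via the bilinear identification of $(\ell_2^q)^*$ with $\ell_2^q$, apply zero padding, and handle the second estimate by the transpose argument with $a(p')=a(p)$. The differences are cosmetic: you normalize $\norm{T^{-1}}=1$ and put the factor $s^{a(p)}$ on the $u_i$, whereas the paper normalizes $\norm{U}\le1$ and puts it on the dual vectors $v_j$. Your explicit remark $G_2^{(t)}(A^T)=G_2^{(t)}(A)$ also fills in a step the paper leaves implicit.
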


\begin{proof}
Fix admissible vectors
$x_1,\ldots,x_m\in B_{\ell_p^r}$ and
$y_1,\ldots,y_n\in B_{\ell_{p'}^r}$, and put
\[
E=\operatorname{span}\{x_1,\ldots,x_m\},
\qquad q=\dim E.
\]
If $q=0$, the expression to be estimated is zero. Otherwise
$q\le s$, and \eqref{eq:Lewis-distance} gives an isomorphism
$U:E\to\ell_2^q$ which can be normalized so that
\begin{equation}\label{eq:U-normalization}
\norm U\le1,
\qquad
\norm{U^{-1}}\le q^{a(p)}.
\end{equation}
Set $u_i=Ux_i$. Then $\norm{u_i}_2\le1$.

For each $j$, restriction of the pairing with $y_j$ defines a
functional on $E$:
\[
\varphi_j(x)=\pair{x}{y_j}.
\]
Hölder's inequality gives
$\norm{\varphi_j}_{E^*}\le\norm{y_j}_{p'}\le1$.
Consequently,
\[
\norm{\varphi_j\circ U^{-1}}_{(\ell_2^q)^*}
\le\norm{U^{-1}}\le q^{a(p)}.
\]
Identify $(\ell_2^q)^*$ with $\ell_2^q$ using the bilinear
pairing in \eqref{eq:G-rmn}. This identification is isometric
over both scalar fields. We may therefore write
\[
\varphi_j(U^{-1}u)=\pair{u}{v_j}
\quad(u\in\ell_2^q),
\qquad
\norm{v_j}_2\le q^{a(p)}.
\]
The vectors $w_j=q^{-a(p)}v_j$ belong to $B_{\ell_2^q}$.
It follows that
\begin{align*}
\abs{\sum_{i=1}^m\sum_{j=1}^n
a_{ij}\pair{x_i}{y_j}}
&=
q^{a(p)}
\abs{\sum_{i=1}^m\sum_{j=1}^n
a_{ij}\pair{u_i}{w_j}}\\
&\le q^{a(p)}G_2^{(q)}(A)\\
&\le s^{a(p)}G_2^{(s)}(A).
\end{align*}
The last step uses $q\le s$ and zero padding. Taking the
supremum over the original vectors proves \eqref{eq:Lewis-x}.

To obtain \eqref{eq:Lewis-y}, apply \eqref{eq:Lewis-x} to
$A^T$ with exponent $p'$. Indeed,
$G_p^{(r)}(A)=G_{p'}^{(r)}(A^T)$ and $a(p')=a(p)$.
\end{proof}

\begin{proof}[Proof of Theorem \ref{thm:A}]
Put $d=\min\{r,m,n\}$. One of the two quantities
$\min\{r,m\}$ and $\min\{r,n\}$ equals $d$. Applying the
corresponding estimate from Lemma \ref{lem:Lewis-reduction}
gives, for every $A\in\K^{m\times n}$,
\begin{equation}\label{eq:A-pointwise-first}
G_p^{(r)}(A)\le d^{a(p)}G_2^{(d)}(A).
\end{equation}

At $p=2$, this says $G_2^{(r)}(A)\le G_2^{(d)}(A)$.
The reverse inequality follows by embedding $\ell_2^d$ into
$\ell_2^r$. Hence
\[
G_2^{(r)}(A)=G_2^{(d)}(A),
\]
which proves \eqref{eq:Hilbert-saturation} and completes
\eqref{eq:pointwise-bottleneck}. Dividing
\eqref{eq:A-pointwise-first} by $\norm A_{\infty,1}$ and taking
the supremum over nonzero $A$ yields
\begin{equation}\label{eq:A-upper-refined}
\G_p^{\K}(r;m,n)
\le d^{a(p)}\G_2^{\K}(r;m,n)
\le d^{a(p)}\kappa_G^{\K}(d).
\end{equation}

For the lower bound, zero padding gives
\begin{equation}\label{eq:square-padding}
\G_p^{\K}(r;m,n)\ge\G_p^{\K}(d;d,d).
\end{equation}
Suppose first that $p\ge2$. If
$x_i\in B_{\ell_\infty^d}$ and $y_j\in B_{\ell_1^d}$,
then
\[
\norm{d^{-1/p}x_i}_p\le1,
\qquad
\norm{y_j}_{p'}\le1.
\]
Thus every configuration admissible for the square
$p=\infty$ problem yields, after rescaling the $x_i$,
an admissible configuration for the $p$ problem. Taking
suprema gives
\begin{align}
\G_p^{\K}(d;d,d)
&\ge d^{-1/p}\G_\infty^{\K}(d;d,d)\notag\\
&=B_d^{\K}d^{1/2-1/p}
 =B_d^{\K}d^{a(p)}.
\label{eq:A-lower-pge2}
\end{align}
For $p\le2$, use Lemma \ref{lem:duality} and apply the same
argument to $p'\ge2$; the square dimensions remain unchanged.
Together with \eqref{eq:square-padding}, this proves
\eqref{eq:main-bottleneck}.
\end{proof}

\begin{corollary}\label{cor:d1}
If $\min\{r,m,n\}=1$, then
$\G_p^{\K}(r;m,n)=1$ for every $1\le p\le\infty$.
\end{corollary}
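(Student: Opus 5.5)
When $d=1$, Theorem~\ref{thm:A} already reduces most of the work to the Hilbertian case. By \eqref{eq:main-bottleneck}, since $a(p)$ only enters through the factor $d^{a(p)}=1$, we have
\[
B_1^{\K}\le\G_p^{\K}(r;m,n)\le\G_2^{\K}(r;m,n).
\]
It therefore suffices to prove two things: $\G_2^{\K}(r;m,n)\le1$ and $B_1^{\K}\ge1$.

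For the upper bound we use the pointwise stabilization \eqref{eq:Hilbert-saturation}, which gives $G_2^{(r)}(A)=G_2^{(1)}(A)$ for every $A$. In dimension one, the admissible vectors of $\ell_2^1$ are scalars $s_i,t_j$ with $|s_i|,|t_j|\le1$. The bilinear pairing is simply $s_it_j$. Comparing with \eqref{eq:matrix-norm}, this gives exactly $G_2^{(1)}(A)=\norm{A}_{\infty,1}$. Hence $\G_2^{\K}(r;m,n)\le1$, and so $\G_p^{\K}(r;m,n)\le1$ for all $p$.

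For the lower bound, $B_1^{\K}=\G_\infty^{\K}(1;1,1)$. For a $1\times1$ matrix $A=(a)$ and scalars $|x|\le1$, $|y|\le1$, the numerator $G_\infty^{(1)}(A)$ is $\sup|a\,x\,y|=|a|$, and $\norm{A}_{\infty,1}=|a|$ as well. So this ratio equals $1$. Alternatively, the bound $B_1^{\K}\ge\beta_1^{\K}=1$ follows from Theorem~\ref{thm:B}, since $1$ is a Hadamard order. Either way $\G_p^{\K}(r;m,n)\ge1$, and the two bounds together give equality.

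There is no real obstacle here; the only point requiring care is checking that in dimension one every norm $\ell_p^1$ coincides with the absolute value, so the scalar problem is literally the denominator.
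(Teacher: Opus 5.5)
Your proof is correct and follows the paper's argument. The paper simply observes that $B_1^{\K}=\kappa_G^{\K}(1)=1$, so both outer terms of \eqref{eq:main-bottleneck} equal one; your use of the middle term $\G_2^{\K}(r;m,n)$ together with the stabilization $G_2^{(r)}(A)=G_2^{(1)}(A)=\norm{A}_{\infty,1}$ just spells out why $\kappa_G^{\K}(1)=1$.
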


\begin{proof}
In this case $B_1^{\K}=\kappa_G^{\K}(1)=1$, so both sides of
\eqref{eq:main-bottleneck} equal one.
\end{proof}
\section{Endpoint compression and rectangular refinements}

The endpoint numerator admits an exact rectangular formula.
Write
\[
\mathbb T_{\K}:=\{\zeta\in\K:|\zeta|=1\}.
\]

\begin{lemma}\label{lem:rectangular-infinity}
Let $A=(a_{ij})\in\K^{m\times n}$ and $r\in\N$. Then
\begin{equation}\label{eq:rectangular-infinity-exact}
G_\infty^{(r)}(A)
=
\max_{\substack{
\sigma:\{1,\ldots,n\}\to\{1,\ldots,r\}\\
\zeta_1,\ldots,\zeta_n\in\mathbb T_{\K}}}
\sum_{i=1}^{m}\sum_{k=1}^{r}
\left|
\sum_{\substack{1\le j\le n\\ \sigma(j)=k}}
a_{ij}\zeta_j
\right|.
\end{equation}
In particular, if $r\ge n$, then
\begin{equation}\label{eq:rectangular-infinity-saturation}
G_\infty^{(r)}(A)
=
\sum_{i=1}^{m}\sum_{j=1}^{n}|a_{ij}|.
\end{equation}
Thus, for $A\in\K^{d\times d}$,
\begin{equation}\label{eq:G-infinity-exact}
G_\infty^{(d)}(A)
=
\sum_{i,j=1}^{d}|a_{ij}|.
\end{equation}
\end{lemma}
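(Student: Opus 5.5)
The plan is to compute $G_\infty^{(r)}(A)$ by optimizing first over the $x_i$ and then over the $y_j$, and to recognize the extreme points of the resulting convex problem. For fixed $y_1,\ldots,y_n\in B_{\ell_1^r}$, the vector $w_i:=\sum_j a_{ij}y_j\in\K^r$ is paired with $x_i\in B_{\ell_\infty^r}$. Since the bilinear pairing $\ell_\infty^r\times\ell_1^r$ is an isometric duality, $\sup_{x_i}\abs{\pair{x_i}{w_i}}=\norm{w_i}_1$. Choosing the unimodular phases of each $x_i$ independently makes all terms nonnegative simultaneously, so
\[
G_\infty^{(r)}(A)=\sup_{y_j\in B_{\ell_1^r}}\sum_{i=1}^m\sum_{k=1}^r\abs{\sum_{j=1}^n a_{ij}(y_j)_k}.
\]

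The key step is that the right-hand side is a convex function of each $y_j$ separately, the others being fixed, since it is a sum of absolute values of affine functions. A convex continuous function on the compact convex set $B_{\ell_1^r}$ attains its maximum at an extreme point. The extreme points of $B_{\ell_1^r}(\K)$ are $\zeta e_k$ with $\zeta\in\mathbb T_\K$ and $1\le k\le r$; over $\C$ this still holds, because $B_{\ell_1^r}$ is the closed convex hull of these points and every other point is a nontrivial convex combination.

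Replacing $y_1,\ldots,y_n$ one at a time by extreme points does not decrease the value. By compactness the supremum is attained, so the supremum equals the maximum over configurations $y_j=\zeta_j e_{\sigma(j)}$. Substituting this configuration gives $(y_j)_k=\zeta_j\mathbf 1_{\sigma(j)=k}$, which is exactly the formula \eqref{eq:rectangular-infinity-exact}. Conversely, every such configuration is admissible, so the maximum is also a lower bound, and equality holds.

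For \eqref{eq:rectangular-infinity-saturation}, the triangle inequality bounds each block sum by $\sum_{\sigma(j)=k}\abs{a_{ij}}$. This gives the upper bound $\sum_{i,j}\abs{a_{ij}}$ for every $\sigma$ and every choice of phases. When $r\ge n$, an injective $\sigma$ makes each block a single term, so the upper bound is attained for any choice of phases. Then \eqref{eq:G-infinity-exact} is the case $r=m=n=d$. The only point requiring care is the extreme-point step, where coordinatewise convexity must be applied sequentially rather than jointly; this is routine.
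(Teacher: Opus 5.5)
Your proposal is correct and follows the paper's proof essentially step for step. Both arguments optimize the $x_i$ coordinatewise to obtain $\sum_{i,k}\abs{\sum_j a_{ij}(y_j)_k}$, then use separate convexity in each $y_j$, compactness, and successive replacement of a maximizing tuple by extreme points $\zeta e_k$ of $B_{\ell_1^r}$, and finish with the triangle inequality and an injective $\sigma$ when $r\ge n$.
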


\begin{proof}
Fix $y_1,\ldots,y_n\in B_{\ell_1^r}$ and set
\[
b_{ik}:=\sum_{j=1}^{n}a_{ij}(y_j)_k
\qquad(1\le i\le m,\ 1\le k\le r).
\]
Since the coordinates of the vectors
$x_i\in B_{\ell_\infty^r}$ can be chosen independently,
\[
\sup_{x_1,\ldots,x_m\in B_{\ell_\infty^r}}
\left|\sum_{i=1}^{m}\sum_{k=1}^{r}x_{ik}b_{ik}\right|
=
\sum_{i=1}^{m}\sum_{k=1}^{r}|b_{ik}|.
\]
Indeed, the upper bound is the triangle inequality. For the
reverse inequality, take $x_{ik}=\overline{b_{ik}}/|b_{ik}|$
when $b_{ik}\ne0$ in the complex case, and the corresponding
sign in the real case; choose any scalar of modulus one when
$b_{ik}=0$. Consequently,
\[
G_\infty^{(r)}(A)
=
\max_{y_1,\ldots,y_n\in B_{\ell_1^r}}
\sum_{i=1}^{m}\sum_{k=1}^{r}
\left|\sum_{j=1}^{n}a_{ij}(y_j)_k\right|.
\]

The function on the right is continuous and convex in each
$y_j$ separately. Its maximum is attained because the domain
is compact. Starting from a maximizing tuple, maximize
successively over each $y_j$ while keeping the other variables
fixed. A convex function on the finite-dimensional $\ell_1^r$
unit ball has a maximizer at an extreme point. These extreme
points are precisely $\zeta e_k$, where
$\zeta\in\mathbb T_{\K}$ and $1\le k\le r$. Each successive
replacement preserves the global maximum. We may therefore
take $y_j=\zeta_j e_{\sigma(j)}$ for all $j$, which gives
\eqref{eq:rectangular-infinity-exact}.

The triangle inequality bounds the right-hand side of
\eqref{eq:rectangular-infinity-exact} by
$\sum_{i,j}|a_{ij}|$. If $r\ge n$, choose $\sigma$ injective;
then every inner sum has at most one term and this upper
bound is attained. This proves
\eqref{eq:rectangular-infinity-saturation}, of which
\eqref{eq:G-infinity-exact} is the square case.
\end{proof}

\begin{proposition}[Endpoint column compression]
\label{prop:endpoint-compression}
Let $r,m,n\in\N$ and put $k=\min\{r,n\}$. Then
\begin{equation}\label{eq:endpoint-compression}
\G_\infty^\K(r;m,n)
=
\rho\bigl(\ell_1^m(\K),\ell_1^k(\K)\bigr).
\end{equation}
Consequently,
\begin{equation}\label{eq:endpoint-compression-dual}
\G_1^\K(r;m,n)
=
\rho\bigl(\ell_1^n(\K),
          \ell_1^{\min\{r,m\}}(\K)\bigr).
\end{equation}
\end{proposition}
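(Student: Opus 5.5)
The plan is to identify both tensor norms concretely and then compare with the exact partition formula of Lemma~\ref{lem:rectangular-infinity}. Identify $z\in\ell_1^m(\K)\otimes\ell_1^k(\K)$ with its coefficient matrix $B=(b_{il})\in\K^{m\times k}$, so that $z=\sum_{i,l}b_{il}e_i\otimes e_l$. Since $\ell_1^m\hat\otimes_\pi\ell_1^k=\ell_1^{mk}$ isometrically, we get $\norm z_\pi=\sum_{i,l}|b_{il}|$. The injective norm is the supremum of $|\sum b_{il}s_it_l|$ over $s\in B_{\ell_\infty^m}$ and $t\in B_{\ell_\infty^k}$, which is exactly $\norm B_{\infty,1}$ from \eqref{eq:matrix-norm}. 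Hence
\[
\rho\bigl(\ell_1^m,\ell_1^k\bigr)=\sup_{B\ne0}\frac{\sum_{i,l}|b_{il}|}{\norm B_{\infty,1}}.
\]
This reduces the proposition to comparing that ratio with $G_\infty^{(r)}(A)/\norm A_{\infty,1}$.

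\emph{Upper bound.} Fix $A\in\K^{m\times n}$ and a maximizing pair $(\sigma,\zeta)$ in \eqref{eq:rectangular-infinity-exact}. The image of $\sigma$ has at most $k=\min\{r,n\}$ elements. After relabelling, define an $m\times k$ matrix by $b_{il}=\sum_{\sigma(j)=l}a_{ij}\zeta_j$, using zero columns if the image is smaller. Then $G_\infty^{(r)}(A)=\sum_{i,l}|b_{il}|$.

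The key observation is that this compression does not increase the $(\infty,1)$ norm. Indeed, for $\norm s_\infty,\norm t_\infty\le1$,
\[
\sum_{i,l}b_{il}s_it_l=\sum_{i,j}a_{ij}s_i\bigl(\zeta_jt_{\sigma(j)}\bigr),
\]
and $|\zeta_jt_{\sigma(j)}|\le1$, so $\norm B_{\infty,1}\le\norm A_{\infty,1}$. Therefore
\[
G_\infty^{(r)}(A)\le\rho(\ell_1^m,\ell_1^k)\,\norm B_{\infty,1}\le\rho(\ell_1^m,\ell_1^k)\,\norm A_{\infty,1}.
\]
Taking the supremum over $A\ne0$ gives "$\le$" in \eqref{eq:endpoint-compression}.

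\emph{Lower bound.} Take any nonzero $B\in\K^{m\times k}$ and pad it with $n-k$ zero columns to obtain $A\in\K^{m\times n}$; this is possible since $k\le n$. Padding leaves $\norm A_{\infty,1}=\norm B_{\infty,1}$. Choose $\sigma$ injective on the first $k$ columns, which is possible since $k\le r$, and take $\zeta_j=1$. Then \eqref{eq:rectangular-infinity-exact} gives $G_\infty^{(r)}(A)\ge\sum_{i,l}|b_{il}|$. Taking the supremum over $B$ yields "$\ge$". Alternatively, this direction follows from \eqref{eq:rectangular-infinity-saturation} combined with Lemma~\ref{lem:padding}.

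\emph{Dual formula.} Lemma~\ref{lem:duality} gives $\G_1^\K(r;m,n)=\G_\infty^\K(r;n,m)$. Applying \eqref{eq:endpoint-compression} with the roles of $m$ and $n$ exchanged then yields \eqref{eq:endpoint-compression-dual}.

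The only step requiring care is the contraction estimate $\norm B_{\infty,1}\le\norm A_{\infty,1}$ for the compressed matrix. It works because merging columns with unimodular weights corresponds to restricting the test vectors $t\in B_{\ell_\infty^n}$ to those of the form $t_j=\zeta_jt'_{\sigma(j)}$. The identification of $\norm\cdot_\pi$ on $\ell_1\otimes\ell_1$ with the entrywise $\ell_1$ norm is standard.
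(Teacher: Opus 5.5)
Your proof is correct and follows the paper's argument essentially step for step. You use the same identification of $\rho(\ell_1^m,\ell_1^k)$ with $\sup_{B\ne0}\sum_{i,l}|b_{il}|/\norm{B}_{\infty,1}$, and the same column merging along a maximizing $(\sigma,\zeta)$ from Lemma~\ref{lem:rectangular-infinity}, with the contraction $\norm{B}_{\infty,1}\le\norm{A}_{\infty,1}$ obtained from the test vector $u_j=\zeta_j t_{\sigma(j)}$. The reverse inequality again comes from zero-column padding with an injective assignment, and the $p=1$ formula from Lemma~\ref{lem:duality}; your alternative route to the lower bound via \eqref{eq:rectangular-infinity-saturation} and Lemma~\ref{lem:padding} is also valid.
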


\begin{proof}
For a matrix $C=(c_{i\ell})\in\K^{m\times k}$, the canonical
identifications for $\ell_1$ tensor products give
\begin{equation}\label{eq:matrix-tensor-ratio}
\rho\bigl(\ell_1^m(\K),\ell_1^k(\K)\bigr)
=
\sup_{C\ne0}
\frac{\sum_{i=1}^m\sum_{\ell=1}^k|c_{i\ell}|}
     {\norm{C}_{\infty,1}}.
\end{equation}

Fix $A=(a_{ij})\in\K^{m\times n}$. By
Lemma~\ref{lem:rectangular-infinity}, there are an assignment
$\sigma:\{1,\ldots,n\}\to\{1,\ldots,r\}$ and phases
$\zeta_j\in\mathbb T_\K$ such that
\[
G_\infty^{(r)}(A)
=
\sum_{i=1}^m\sum_{\ell=1}^r|b_{i\ell}|,
\qquad
b_{i\ell}:=
\sum_{\sigma(j)=\ell}a_{ij}\zeta_j.
\]
At most $k$ columns of $B=(b_{i\ell})$ are nonzero. Deleting
the remaining columns does not change either
$\sum_{i,\ell}|b_{i\ell}|$ or $\norm{B}_{\infty,1}$.

Furthermore,
\begin{equation}\label{eq:compressed-matrix-norm}
\norm{B}_{\infty,1}\le\norm{A}_{\infty,1}.
\end{equation}
Indeed, for $\norm{t}_\infty\le1$, put
$u_j=\zeta_jt_{\sigma(j)}$. Then $\norm{u}_\infty\le1$ and
$Bt=Au$. Thus \eqref{eq:matrix-tensor-ratio} and
\eqref{eq:compressed-matrix-norm} give
\[
G_\infty^{(r)}(A)
\le
\rho\bigl(\ell_1^m(\K),\ell_1^k(\K)\bigr)
\norm{A}_{\infty,1}.
\]
Taking the supremum over $A\ne0$ proves one inequality in
\eqref{eq:endpoint-compression}.

Conversely, let $C\in\K^{m\times k}$ and append $n-k$ zero
columns to obtain a matrix $A\in\K^{m\times n}$. Then
$\norm{A}_{\infty,1}=\norm{C}_{\infty,1}$. Since $k\le r$,
the $k$ nonzero columns may be assigned to distinct coordinates
in \eqref{eq:rectangular-infinity-exact}. Hence
\[
G_\infty^{(r)}(A)
=
\sum_{i=1}^m\sum_{\ell=1}^k|c_{i\ell}|.
\]
Taking the supremum over $C\ne0$ proves the reverse inequality.

Finally, Lemma~\ref{lem:duality} gives
\[
\G_1^\K(r;m,n)=\G_\infty^\K(r;n,m).
\]
Applying \eqref{eq:endpoint-compression} to the right-hand
side proves \eqref{eq:endpoint-compression-dual}.
\end{proof}

\begin{lemma}[Operator realization]\label{lem:operator-realization}
For $A=(a_{ij})\in\K^{m\times n}$ and $1\le p\le\infty$, define
\[
T_A^{(p)}:
\ell_\infty^m(\ell_p^r)
\longrightarrow
\ell_1^n(\ell_p^r)
\]
by
\begin{equation}\label{eq:TAp-def}
T_A^{(p)}(x_1,\ldots,x_m)
:=
\left(
\sum_{i=1}^m a_{ij}x_i
\right)_{j=1}^n.
\end{equation}
Then
\begin{equation}\label{eq:TAp-Gp}
\|T_A^{(p)}\|
=
G_p^{(r)}(A).
\end{equation}
\end{lemma}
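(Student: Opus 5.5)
The plan is to compute the operator norm directly from the definitions of the two mixed norms, reducing it to the bilinear supremum in \eqref{eq:G-rmn} by duality. The norm on $\ell_\infty^m(\ell_p^r)$ is $\max_i\norm{x_i}_p$, so its unit ball is exactly the set of tuples with $x_1,\ldots,x_m\in B_{\ell_p^r}$. The norm on $\ell_1^n(\ell_p^r)$ is $\sum_j\norm{z_j}_p$. Hence
\[
\|T_A^{(p)}\|=\sup_{x_i\in B_{\ell_p^r}}\sum_{j=1}^n\norm{\sum_{i=1}^m a_{ij}x_i}_p .
\]

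Next I dualize each summand separately. In finite dimensions the bilinear pairing $\pair{\cdot}{\cdot}$ identifies $(\ell_p^r)^*$ isometrically with $\ell_{p'}^r$, over both $\R$ and $\C$ and for every $1\le p\le\infty$. Therefore $\norm{z}_p=\sup_{y\in B_{\ell_{p'}^r}}\abs{\pair{z}{y}}$, and the supremum is attained by compactness. Since each $y_j$ may be chosen independently for each $j$, we get
\[
\sum_j\norm{z_j}_p=\sup_{y_j\in B_{\ell_{p'}^r}}\sum_j\abs{\pair{z_j}{y_j}}.
\]

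The absolute values must then be moved outside the sum over $j$. Multiplying $y_j$ by a unimodular scalar $\theta_j$ (a sign in the real case) keeps $y_j$ in $B_{\ell_{p'}^r}$ and makes each $\pair{z_j}{\theta_j y_j}$ real and nonnegative. This gives
\[
\sup_{y_j}\sum_j\abs{\pair{z_j}{y_j}}=\sup_{y_j}\abs{\sum_j\pair{z_j}{y_j}}.
\]
The reverse inequality is simply the triangle inequality.

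Finally, bilinearity gives $\sum_j\pair{\sum_i a_{ij}x_i}{y_j}=\sum_{i,j}a_{ij}\pair{x_i}{y_j}$. Combining the two suprema yields exactly $G_p^{(r)}(A)$ as in \eqref{eq:G-rmn}.

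There is no real obstacle here. The only point needing care is the endpoint cases $p=1$ and $p=\infty$ and the complex field. There one must check that the bilinear, not Hermitian, pairing still norms $\ell_p^r$. This holds by taking $y_k=\overline{\operatorname{sgn} z_k}\,|z_k|^{p-1}/\norm{z}_p^{p-1}$ for $1<p<\infty$, a single coordinate with the conjugate phase for $p=\infty$, and conjugate phases in all coordinates for $p=1$.
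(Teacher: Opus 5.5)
Your proof is correct and follows the paper's argument. The paper likewise fixes the $x_i$ and uses that the $y_j$ may be chosen independently, together with $\ell_p^r$--$\ell_{p'}^r$ duality under the bilinear pairing, to get $\sum_j\norm{\sum_i a_{ij}x_i}_p$; it then takes the supremum over the unit ball of $\ell_\infty^m(\ell_p^r)$, and your phase rotation and explicit norming vectors simply spell out details the paper leaves implicit.
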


\begin{proof}
Fix $x_1,\ldots,x_m\in B_{\ell_p^r}$. Since the vectors
$y_1,\ldots,y_n\in B_{\ell_{p'}^r}$ may be chosen independently,
duality between $\ell_p^r$ and $\ell_{p'}^r$ gives
\begin{align*}
&\sup_{y_1,\ldots,y_n\in B_{\ell_{p'}^r}}
\left|
\sum_{i=1}^m\sum_{j=1}^n
a_{ij}\pair{x_i}{y_j}
\right|\\
&\hspace{2cm}
=
\sum_{j=1}^n
\left\|
\sum_{i=1}^m a_{ij}x_i
\right\|_p.
\end{align*}
Taking the supremum over
$x_1,\ldots,x_m\in B_{\ell_p^r}$ gives
\eqref{eq:TAp-Gp}.
\end{proof}

\begin{proof}[\textbf{Proof of Theorem~\ref{thm:C}}]
We begin with the lower estimates, which are valid over both scalar
fields, for the full range $1\le p\le\infty$, and require no
interpolation.

Fix $1\le p\le\infty$ and
\[
1\le k\le k_+=\min\{r,n\}.
\]
By Proposition~\ref{prop:endpoint-compression},
\begin{equation}\label{eq:C-endpoint-k-plus}
\Gamma_\infty^\K(k;m,k)
=
\mathcal R_{\K}(m,k).
\end{equation}
Let $C\in\K^{m\times k}$ be arbitrary. Every configuration
\[
x_1,\ldots,x_m\in B_{\ell_\infty^k},
\qquad
y_1,\ldots,y_k\in B_{\ell_1^k}
\]
which is admissible for $G_\infty^{(k)}(C)$ produces an admissible
configuration for the $p$-problem after replacing
\[
x_i
\quad\text{by}\quad
k^{-1/p}x_i.
\]
Indeed,
\[
\|k^{-1/p}x_i\|_p
\le
k^{-1/p}k^{1/p}\|x_i\|_\infty
\le1,
\]
whereas
\[
\|y_j\|_{p'}
\le
\|y_j\|_1
\le1.
\]
Therefore
\[
G_p^{(k)}(C)
\ge
k^{-1/p}G_\infty^{(k)}(C).
\]
Dividing by $\|C\|_{\infty,1}$ and taking the supremum over
$C\ne0$ gives
\begin{equation}\label{eq:C-k-lower-plus}
\Gamma_p^\K(k;m,k)
\ge
k^{-1/p}\mathcal R_{\K}(m,k).
\end{equation}
Since $k\le r$ and $k\le n$, zero padding yields
\[
\Gamma_p^\K(r;m,n)
\ge
\Gamma_p^\K(k;m,k).
\]
Taking the supremum over $1\le k\le k_+$ proves
\eqref{eq:rectangular-lower-plus}. The normalized form
\eqref{eq:rectangular-lower-plus-normalized} is merely the identity
\[
k^{-1/p}
=
k^{-1/2}k^{1/2-1/p}.
\]

For the second lower estimate, apply Lemma~\ref{lem:duality} for an
arbitrary $1\le p\le\infty$:
\[
\Gamma_p^\K(r;m,n)
=
\Gamma_{p'}^\K(r;n,m).
\]
The first lower estimate, already proved for the full range of
exponents, applied to the right-hand side gives
\[
\Gamma_p^\K(r;m,n)
\ge
\sup_{1\le k\le\min\{r,m\}}
k^{-1/p'}\mathcal R_{\K}(n,k),
\]
which is \eqref{eq:rectangular-lower-minus}. Since
\[
-\frac1{p'}
=
-\frac12+\left(\frac1p-\frac12\right),
\]
we also obtain
\eqref{eq:rectangular-lower-minus-normalized}.

We turn to the complex upper estimates. Let $2\le p\le\infty$ and put
\[
\theta:=1-\frac2p.
\]
Then $0\le\theta\le1$ and
\[
\frac1p
=
\frac{1-\theta}{2}
+
\frac{\theta}{\infty}.
\]
For a fixed complex matrix $A$, Lemma~\ref{lem:operator-realization}
and complex interpolation of the finite $\ell_p$ sums in
\eqref{eq:TAp-def} give
\begin{equation}\label{eq:C-fixedA-interpolation-plus}
G_p^{(r)}(A)
\le
G_2^{(r)}(A)^{1-\theta}
G_\infty^{(r)}(A)^\theta.
\end{equation}
Since $1-\theta=2/p$, division by $\|A\|_{\infty,1}$ gives
\begin{align*}
\frac{G_p^{(r)}(A)}{\|A\|_{\infty,1}}
&\le
\left(
\frac{G_2^{(r)}(A)}{\|A\|_{\infty,1}}
\right)^{2/p}
\left(
\frac{G_\infty^{(r)}(A)}{\|A\|_{\infty,1}}
\right)^{1-2/p}\\
&\le
\Gamma_2^\C(r;m,n)^{2/p}
\Gamma_\infty^\C(r;m,n)^{1-2/p}.
\end{align*}
Taking the supremum over $A\ne0$ and using
Proposition~\ref{prop:endpoint-compression},
\[
\Gamma_\infty^\C(r;m,n)
=
\mathcal R_{\C}(m,k_+),
\]
proves \eqref{eq:rectangular-upper-plus}.

Since for $p\ge2$
\[
2a(p)=1-\frac2p,
\]
formula \eqref{eq:rectangular-upper-plus-ratio} is simply a
rewriting of \eqref{eq:rectangular-upper-plus}.

Finally let $1\le p\le2$, and put
\[
\eta:=2-\frac2p.
\]
Then $0\le\eta\le1$ and
\[
\frac1p
=
\frac{1-\eta}{1}
+
\frac{\eta}{2}.
\]
Complex interpolation between $p=1$ and $p=2$ gives, for every
complex matrix $A$,
\[
G_p^{(r)}(A)
\le
G_1^{(r)}(A)^{1-\eta}
G_2^{(r)}(A)^\eta.
\]
Since
\[
1-\eta=\frac2p-1,
\qquad
\eta=2-\frac2p,
\]
we obtain
\[
\Gamma_p^\C(r;m,n)
\le
\Gamma_1^\C(r;m,n)^{\,2/p-1}
\Gamma_2^\C(r;m,n)^{\,2-2/p}.
\]
By duality and endpoint compression,
\[
\Gamma_1^\C(r;m,n)
=
\mathcal R_{\C}(n,k_-).
\]
This proves \eqref{eq:rectangular-upper-minus}. Since now
\[
2a(p)=\frac2p-1,
\]
the ratio form \eqref{eq:rectangular-upper-minus-ratio} follows as
well.
\end{proof}

\begin{remark}[Universal power versus rectangular profile]
\label{rem:power-versus-profile}
Theorem~\ref{thm:A} and Theorem~\ref{thm:C} contain different kinds
of information. The former identifies the universal dimensional power
through
\[
d=\min\{r,m,n\}.
\]
The latter retains the geometry of the whole rectangle through
\[
\mathcal R_{\K}(m,k),
\qquad
\mathcal R_{\K}(n,k),
\]
and optimization over intermediate endpoint dimensions $k$.

For example, when $p\ge2$, choosing $k=d$ in
\eqref{eq:rectangular-lower-plus} gives
\[
\Gamma_p^\K(r;m,n)
\ge
d^{-1/p}\mathcal R_{\K}(m,d)
\ge
d^{-1/p}\mathcal R_{\K}(d,d)
=
B_d^\K d^{1/2-1/p},
\]
which is the lower estimate of Theorem~\ref{thm:A}. The optimization
in Theorem~\ref{thm:C}, however, need not be attained at $k=d$.
Thus the effective dimension determines the power law without
exhausting the finer rectangular dependence.
\end{remark}

\begin{corollary}[Endpoint-sensitive improvement criterion]
\label{cor:endpoint-sensitive-improvement}
Assume $\K=\C$.

If $2\le p\le\infty$, then
\[
\Gamma_p^\C(r;m,n)
\le
d^{1/2-1/p}\Gamma_2^\C(r;m,n)\textcolor{black}{.}
\]
\textcolor{black}{If $2<p\le\infty$, the endpoint-interpolation estimate
\eqref{eq:rectangular-upper-plus} is strictly smaller whenever}
\[
\mathcal R_{\C}(m,\min\{r,n\})
<
\sqrt d\,\Gamma_2^\C(r;m,n).
\]

\textcolor{black}{If $1\le p<2$, the corresponding improvement occurs whenever}
\[
\mathcal R_{\C}(n,\min\{r,m\})
<
\sqrt d\,\Gamma_2^\C(r;m,n).
\]
\end{corollary}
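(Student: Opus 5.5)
The corollary combines Theorem~\ref{thm:A} with the interpolation bounds of Theorem~\ref{thm:C}(iii)--(iv), and the plan is to compare the two upper bounds algebraically. The first assertion, for $2\le p\le\infty$, is the middle inequality of \eqref{eq:main-bottleneck}, since $a(p)=1/2-1/p$ in this range. Nothing further is needed for it.

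For the strict improvement with $2<p\le\infty$, put $\Gamma_2:=\Gamma_2^\C(r;m,n)>0$ and $R:=\mathcal R_{\C}(m,\min\{r,n\})$. I will rewrite the Theorem~\ref{thm:A} bound as
\[
d^{1/2-1/p}\Gamma_2=\Gamma_2\left(\frac{\sqrt d\,\Gamma_2}{\Gamma_2}\right)^{2a(p)},
\]
using $2a(p)=1-2/p$. The interpolation bound \eqref{eq:rectangular-upper-plus-ratio} equals $\Gamma_2\,(R/\Gamma_2)^{2a(p)}$. For $p>2$ the exponent $2a(p)$ is strictly positive, and $x\mapsto x^{2a(p)}$ is strictly increasing on $[0,\infty)$. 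Hence $R<\sqrt d\,\Gamma_2$ gives
\[
\Gamma_2^{2/p}R^{1-2/p}<\Gamma_2^{2/p}(\sqrt d\,\Gamma_2)^{1-2/p}=d^{1/2-1/p}\Gamma_2,
\]
which is the claimed strict improvement. The exclusion of $p=2$ is necessary, because there the exponent vanishes and the two bounds coincide.

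For $1\le p<2$, I will first record the dual form of the Theorem~\ref{thm:A} bound, $\Gamma_p^\C(r;m,n)\le d^{1/p-1/2}\Gamma_2$, which again comes from \eqref{eq:main-bottleneck}. I then compare it with \eqref{eq:rectangular-upper-minus-ratio}, where now $2a(p)=2/p-1>0$. The same monotonicity argument applies with $R$ replaced by $\mathcal R_{\C}(n,\min\{r,m\})$ and gives strict improvement under the stated hypothesis.

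There is no real obstacle here. The only points to check are that $\Gamma_2>0$, so that the ratio forms make sense, and that the exponent $2a(p)$ is strictly positive off $p=2$.
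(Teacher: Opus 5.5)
Your proof is correct and is essentially the paper's argument: in both, the strict improvement comes from the strict monotonicity of $x\mapsto x^{2a(p)}$ (with $2a(p)>0$ off $p=2$) applied to the endpoint ratio versus $\sqrt d\,\Gamma_2^\C(r;m,n)$. The differences are minor. The paper obtains the first inequality by substituting $\mathcal R_{\C}(m,k_+)=\Gamma_\infty^\C(r;m,n)\le\sqrt d\,\Gamma_2^\C(r;m,n)$ into \eqref{eq:rectangular-upper-plus}, which also shows that the interpolation bound never exceeds the Theorem~\ref{thm:A} bound, whereas you read it directly off \eqref{eq:main-bottleneck}. You also treat $1\le p<2$ through the ratio form \eqref{eq:rectangular-upper-minus-ratio} rather than by duality.
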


\begin{proof}
For $p\ge2$, Theorem~\ref{thm:A} at $p=\infty$ gives
\[
\mathcal R_{\C}(m,\min\{r,n\})
=
\Gamma_\infty^\C(r;m,n)
\le
\sqrt d\,\Gamma_2^\C(r;m,n).
\]
Substitution into \eqref{eq:rectangular-upper-plus} yields
\[
\Gamma_p^\C(r;m,n)
\le
\Gamma_2^\C(r;m,n)^{2/p}
\left(
\sqrt d\,\Gamma_2^\C(r;m,n)
\right)^{1-2/p},
\]
which simplifies to
\[
\Gamma_p^\C(r;m,n)
\le
d^{1/2-1/p}\Gamma_2^\C(r;m,n).
\]
Strictness follows from strictness at the endpoint \textcolor{black}{whenever
$2<p\le\infty$}. The case $p<2$ follows by duality.
\end{proof}

\begin{proposition}[The complex two-row endpoint]
\label{prop:complex-two-rows}
For $r,n\in\N$ and $k=\min\{r,n\}$,
\begin{equation}\label{eq:complex-two-rows-infinity}
\G_\infty^\C(r;2,n)
=
k\sin\!\left(\frac{\pi}{2k}\right).
\end{equation}
If $r\ge2$, then
\begin{equation}\label{eq:complex-two-rows-one}
\G_1^\C(r;2,n)
=
n\sin\!\left(\frac{\pi}{2n}\right).
\end{equation}
\end{proposition}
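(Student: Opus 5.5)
The plan is to reduce everything to the explicit tensor ratio $\mathcal R_{\C}(2,k)$ and compute it. By Proposition~\ref{prop:endpoint-compression}, $\G_\infty^\C(r;2,n)=\mathcal R_{\C}(2,k)$ with $k=\min\{r,n\}$. By \eqref{eq:endpoint-compression-dual}, $\G_1^\C(r;2,n)=\mathcal R_{\C}(n,\min\{r,2\})$, which equals $\mathcal R_{\C}(n,2)=\mathcal R_{\C}(2,n)$ when $r\ge2$, by the tensor flip. So both formulas follow from the single identity
\[
\mathcal R_{\C}(2,k)=k\sin\!\Bigl(\frac{\pi}{2k}\Bigr),\qquad k\ge1 .
\]
To prove it I use \eqref{eq:matrix-tensor-ratio}. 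Take $C\in\C^{2\times k}$ with columns $(a_j,b_j)$. Optimizing the $t$-variables in \eqref{eq:matrix-norm} and factoring out a unimodular scalar gives
\[
\norm{C}_{\infty,1}=\max_{\theta\in\R}\sum_{j=1}^k\abs{a_j+e^{i\theta}b_j}.
\]
Hence $\mathcal R_{\C}(2,k)$ is the supremum of $\sum_j(|a_j|+|b_j|)$ divided by this maximum.

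\emph{Lower bound.} Take $|a_j|=|b_j|=1$ and $b_j/a_j=e^{2\pi i j/k}$. Then
\[
\sum_j\abs{a_j+e^{i\theta}b_j}=2\sum_{j=0}^{k-1}\abs{\cos\!\Bigl(\tfrac\theta2+\tfrac{\pi j}{k}\Bigr)} .
\]
These are $k$ equally spaced samples of the $\pi$-periodic function $|\cos|$. I will compute the maximum of this sum over the shift explicitly, either by summing the geometric series of $e^{i(\cdot)}$ on the arc where $\cos>0$, or by a direct trigonometric identity. The target value is $1/\sin(\pi/(2k))$, which makes the ratio $2k\big/\bigl(2/\sin(\pi/(2k))\bigr)=k\sin(\pi/(2k))$. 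Before relying on this, I will check the cases $k=1,2$, where the predicted ratios are $1$ and $\sqrt2$.

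\emph{Upper bound.} This is the main obstacle. First I reduce to balanced columns $|a_j|=|b_j|$. Write $|a|+|b|=2\min\{|a|,|b|\}+\bigl||a|-|b|\bigr|$ and $|a+e^{i\theta}b|\ge\bigl||a|-|b|\bigr|+\dots$. The more robust route is a convexity argument: for fixed phases, the ratio is a quotient of a linear function by a convex function of the moduli, and I will show it is maximized at extreme configurations. There, each column is either balanced or has a zero entry, and a column with a zero entry contributes at least its full mass at every $\theta$. After this reduction, with weights $w_j\ge0$, $\sum_j w_j=1$, and angles $\psi_j$, the goal is
\[
\max_\alpha\sum_j w_j\abs{\cos(\alpha-\psi_j)}\ \ge\ \frac{1}{k\sin(\pi/(2k))}.
\]
To prove this I will average $\sum_j w_j|\cos(\alpha-\psi_j)|$ against a suitable probability measure on $\alpha$. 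The natural candidate is the uniform measure on $k$ points $\alpha_0+\pi\ell/k$, with $\alpha_0$ chosen favorably. This reduces the claim to the single-function statement that, for every $\psi$, some shift $\alpha_0$ makes the $k$-point average of $|\cos(\alpha_0+\pi\ell/k-\psi)|$ at least $1/(k\sin(\pi/(2k)))$. Since the $\psi_j$ differ, a single $\alpha_0$ must serve all columns. So I will instead average over $\alpha_0$, using that the equispaced sum attains its minimum $\cot(\pi/(2k))/k$-type value and its maximum $1/(k\sin(\pi/(2k)))$ at different shifts. If this does not close cleanly, the fallback is a minimax/duality argument: the optimal weights are equispaced by symmetry, because the extremal problem is invariant under rotation and convex in the weight measure, and the equispaced configuration then yields exactly the lower-bound computation.
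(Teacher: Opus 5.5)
Your reductions (endpoint compression, duality with the tensor flip, and $\norm{C}_{\infty,1}=\max_\theta\sum_j\abs{a_j+e^{i\theta}b_j}$) and your extremal Fourier-type example match the paper's. The upper bound, however, has a genuine gap, and it carries the whole content of the proposition. You need
\[
\max_{\alpha}\sum_{j=1}^k w_j\abs{\cos(\alpha-\psi_j)}\ \ge\ \frac{1}{k\sin(\pi/(2k))}\qquad\Bigl(w_j\ge0,\ \textstyle\sum_j w_j=1\Bigr),
\]
and none of your proposed routes can give it.

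\emph{Averaging fails.} Take any probability measure $\mu$ in $\alpha$ chosen independently of the configuration. Averaging against it certifies at most
\[
\min_\psi\int\abs{\cos(\alpha-\psi)}\,d\mu(\alpha)\le\frac1\pi\int_0^\pi\!\int\abs{\cos(\alpha-\psi)}\,d\mu(\alpha)\,d\psi=\frac2\pi,
\]
whereas $\frac2\pi<\frac1{k\sin(\pi/(2k))}$ for every $k$. Averaging your equispaced $k$-point sums over $\alpha_0$ is exactly the uniform average, so it yields only $\mathcal R_{\C}(2,k)\le\pi/2$.

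\emph{The symmetry fallback fails for the same reason.} The functional $\nu\mapsto\max_\alpha\int\abs{\cos(\alpha-\psi)}\,d\nu(\psi)$ is convex and rotation invariant. But measures with at most $k$ atoms do not form a convex set, so symmetrizing leaves that class and lands on the uniform measure, again with value $2/\pi$.

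The bound must use the fact that there are only $k$ directions. The extremality of the equispaced configuration is precisely the sharp weighted planar signed-sum inequality
\[
\max_{\varepsilon}\norm{\textstyle\sum_j\varepsilon_jv_j}_2\ge\frac{\sum_j\norm{v_j}_2}{k\sin(\pi/(2k))},\qquad v_j\in\R^2,
\]
which the paper imports from Ambrus--Grundbacher (Corollary~3). If you want a self-contained argument, use the zonotope $Z=\sum_j[-v_j,v_j]$ with $v_j=w_j(\cos\psi_j,\sin\psi_j)$. It is a centrally symmetric polygon with at most $2k$ vertices, perimeter $4\sum_jw_j=4$, and circumradius $R=\max_\alpha\sum_jw_j\abs{\cos(\alpha-\psi_j)}$. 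It lies in the convex hull of the radial projections of its vertices onto the circle of radius $R$. Monotonicity of perimeter and concavity of $\sin$ then give $4\le 2k\cdot 2R\sin(\pi/(2k))$.

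A secondary point: your balancing step argues in the wrong direction. With $\sum_j(\abs{a_j}+\abs{b_j})$ fixed, maximizing the ratio means minimizing a convex function of the moduli. Convexity does not put minimizers at extreme points; the extreme configurations (a zero entry in a column) are in fact the worst ones. Two arguments do work:
\begin{itemize}
\item the paper's pointwise bound $\abs{a+e^{i\theta}b}^2=(u-v)^2+4uv\cos^2\frac{\theta-\phi}{2}\ge(u+v)^2\cos^2\frac{\theta-\phi}{2}$, with $u=\abs a$ and $v=\abs b$;
\item the observation that, for fixed phases and fixed column masses, the denominator is convex and even in each imbalance $\abs{a_j}-\abs{b_j}$, hence minimized at balanced columns.
\end{itemize}
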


\begin{proof}
By Proposition~\ref{prop:endpoint-compression}, the first
identity reduces to
\begin{equation}\label{eq:rho-complex-two-rows}
\rho\bigl(\ell_1^2(\C),\ell_1^k(\C)\bigr)
=
k\sin\!\left(\frac{\pi}{2k}\right).
\end{equation}
Let $A=(a_{ij})\in\C^{2\times k}$ and put
$c_j=|a_{1j}|+|a_{2j}|$. Choose $\phi_j\in\R$ so that
\[
\operatorname{Re}
 \bigl(e^{-it}a_{1j}\overline{a_{2j}}\bigr)
=
|a_{1j}a_{2j}|\cos(t-\phi_j)
\qquad(t\in\R);
\]
when $a_{1j}a_{2j}=0$, choose $\phi_j$ arbitrarily.
Writing $u=|a_{1j}|$ and $v=|a_{2j}|$, we have
\begin{align*}
|a_{1j}+e^{it}a_{2j}|^2
&=u^2+v^2+2uv\cos(t-\phi_j)\\
&=(u-v)^2+4uv\cos^2\!\left(
                 \frac{t-\phi_j}{2}\right)\\
&\ge
(u+v)^2\cos^2\!\left(
                 \frac{t-\phi_j}{2}\right).
\end{align*}
Consequently,
\begin{equation}\label{eq:two-row-column-bound}
|a_{1j}+e^{it}a_{2j}|
\ge
c_j\left|\cos\!\left(
               \frac{t-\phi_j}{2}\right)\right|.
\end{equation}

Put
\[
v_j=c_j
\left(\cos\frac{\phi_j}{2},
      \sin\frac{\phi_j}{2}\right)\in\R^2.
\]
The sharp weighted planar signed-sum inequality
\cite[Corollary~3]{AmbrusGrundbacher2026} gives
\[
\max_{\varepsilon_j\in\{-1,1\}}
\left\|\sum_{j=1}^k\varepsilon_jv_j\right\|_2
\ge
\frac{\sum_{j=1}^k c_j}
     {k\sin(\pi/(2k))}.
\]
The left-hand side equals
\[
\max_{\theta\in\R}
\sum_{j=1}^k c_j
\left|\cos\!\left(
       \theta-\frac{\phi_j}{2}\right)\right|.
\]
Indeed, Euclidean duality expresses the norm of each signed sum
as a maximum over unit vectors, and for each fixed unit vector
the signs can be chosen separately. Since
\[
\norm{A}_{\infty,1}
=
\max_{|z_1|=|z_2|=1}
\sum_{j=1}^k|a_{1j}z_1+a_{2j}z_2|,
\]
inequality \eqref{eq:two-row-column-bound} implies
\[
\norm{A}_{\infty,1}
\ge
\frac{\sum_{j=1}^k c_j}
     {k\sin(\pi/(2k))}.
\]
By \eqref{eq:matrix-tensor-ratio}, this proves the upper
bound in \eqref{eq:rho-complex-two-rows}.

For the reverse bound, let $\omega=e^{2\pi i/k}$ and
\[
A_k=
\begin{pmatrix}
1&1&\cdots&1\\
1&\omega&\cdots&\omega^{k-1}
\end{pmatrix}.
\]
Here $\sum_{i,j}|(A_k)_{ij}|=2k$. Moreover,
\begin{align*}
\norm{A_k}_{\infty,1}
&=
\max_{t\in\R}
\sum_{j=0}^{k-1}|1+e^{it}\omega^j|\\
&=
2\max_{t\in\R}
\sum_{j=0}^{k-1}
\left|\cos\!\left(
 \frac{t}{2}+\frac{\pi j}{k}\right)\right|\\
&=
\frac{2}{\sin(\pi/(2k))}.
\end{align*}
The last equality follows by summing the cosines between
successive zeros; equivalently, the unoriented directions
$\pi j/k$ form the equality configuration of the sharp
planar inequality. Thus
\[
\frac{\sum_{i,j}|(A_k)_{ij}|}
     {\norm{A_k}_{\infty,1}}
=
k\sin\!\left(\frac{\pi}{2k}\right),
\]
which proves \eqref{eq:rho-complex-two-rows}.

For $r\ge2$, Lemma~\ref{lem:duality} and
\eqref{eq:endpoint-compression} give
\[
\G_1^\C(r;2,n)
=
\rho\bigl(\ell_1^n(\C),\ell_1^2(\C)\bigr).
\]
The symmetry of $\rho$ and \eqref{eq:rho-complex-two-rows},
with $k=n$, prove \eqref{eq:complex-two-rows-one}.
\end{proof}

In particular,
\begin{equation}\label{eq:complex-223-endpoint}
\G_\infty^\C(2;2,3)=\sqrt2,
\qquad
\G_\infty^\C(r;2,3)=\frac32
\quad(r\ge3).
\end{equation}
The planar Euclidean value used above was obtained earlier in
\cite{AmbrusNietert2020,FuWangYan2023}; the weighted formulation
is stated in \cite{AmbrusGrundbacher2026}.

\begin{proposition}[Two-dimensional exact constants]
\label{prop:small-exact}
Let $1\le p\le\infty$.
\begin{enumerate}[label=\textup{(\roman*)}]
\item If $\min\{r,m,n\}=2$, then
\begin{equation}\label{eq:small-real-exact}
\G_p^\R(r;m,n)=\sqrt2\,2^{a(p)}.
\end{equation}
\item If $r\ge2$, then
\begin{equation}\label{eq:small-complex-exact}
\G_p^\C(r;2,2)=2^{a(p)}.
\end{equation}
\end{enumerate}
\end{proposition}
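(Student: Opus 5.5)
For part (ii) we reduce everything to the Hilbertian value $\Gamma_2^\C(r;2,2)$ and use Theorems~\ref{thm:A} and~\ref{thm:B}. For part (i) we reduce to $\kappa_G^\R(2)$ and the exact real endpoint at $d=2$. In both parts $d=\min\{r,m,n\}=2$. Theorem~\ref{thm:A} then gives
\[
B_2^\K\,2^{a(p)}\le\Gamma_p^\K(r;m,n)\le 2^{a(p)}\Gamma_2^\K(r;m,n)\le\kappa_G^\K(2)\,2^{a(p)},
\]
so it suffices to match the two constants at the two ends of this chain.

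For (ii), I will show $G_2^{(r)}(A)=\norm{A}_{\infty,1}$ for every $A\in\C^{2\times2}$, which gives $\Gamma_2^\C(r;2,2)=1$. By \eqref{eq:Hilbert-saturation} we may take $r=2$. By Lemma~\ref{lem:operator-realization} (the bilinear dual of $\ell_2^r$ is still Euclidean),
\[
G_2^{(r)}(A)=\sup_{\norm{x_1}_2,\norm{x_2}_2\le1}\bigl(\norm{a_{11}x_1+a_{21}x_2}_2+\norm{a_{12}x_1+a_{22}x_2}_2\bigr).
\]
Each summand is nondecreasing in the lengths, so we may take unit vectors $x_1,x_2$. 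The expression then depends only on the Hermitian product $c=\langle x_1,x_2\rangle_{\mathrm{H}}$, with $|c|\le1$, through
\[
\sqrt{|a|^2+|b|^2+2\operatorname{Re}(a\bar b c)},
\]
applied with $(a,b)=(a_{11},a_{21})$ and with $(a,b)=(a_{12},a_{22})$. Each such function is convex in $c$, being the Euclidean norm of an affine function of $c$, realized for example with $x_1=e_1$ and $x_2=\bar c e_1+\sqrt{1-|c|^2}e_2$. Hence the supremum over the disc $|c|\le1$ is attained on its boundary $|c|=1$. There $x_2=\zeta x_1$ with $|\zeta|=1$, which is a scalar configuration, and its value is at most $\norm{A}_{\infty,1}$. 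Combined with Theorem~\ref{thm:A} this yields the upper bound $2^{a(p)}$, and Theorem~\ref{thm:B} gives the matching lower bound $B_2^\C\ge1$.

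For (i), the lower bound comes from computing $B_2^\R$. By Proposition~\ref{prop:endpoint-compression} and \eqref{eq:matrix-tensor-ratio}, the matrix $\begin{pmatrix}1&1\\1&-1\end{pmatrix}$ has entrywise $\ell_1$ sum $4$ and $\norm{\cdot}_{\infty,1}=2$. Hence $\Gamma_\infty^\R(2;2,2)\ge2$ and $B_2^\R\ge\sqrt2$; this also follows from Theorem~\ref{thm:B}, since $2$ is a Hadamard order. For the upper bound we need $\kappa_G^\R(2)\le\sqrt2$, the real Grothendieck constant of dimension two. I will cite Krivine's classical result $\kappa_G^\R(2)=\sqrt2$. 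If a self-contained argument is preferred, write the unit vectors as $x_i=(\cos\theta_i,\sin\theta_i)$ and $y_j=(\cos\varphi_j,\sin\varphi_j)$, and represent $\cos(\theta-\varphi)$ as an average over a random angle $\psi$ of a product $f(\theta-\psi)g(\varphi-\psi)$ with $|f|,|g|\le1$ and total loss $\sqrt2$. Krivine's choice is a sign function paired with a normalized square wave; this rounding is a standard lemma and I expect it to be the main obstacle. Both bounds then give $\sqrt2\,2^{a(p)}$ for all $r,m,n$ with $d=2$, by the chain above.
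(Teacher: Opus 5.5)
Part (i) of your proposal is the paper's argument: the chain from Theorem~\ref{thm:A}, $B_2^\R\ge\sqrt2$ from $H_2$, and Krivine's $\kappa_G^\R(2)=\sqrt2$ by citation. Drop the optional rounding sketch, though. In the translation-invariant form you describe it cannot work. If one factor is a square wave, all its odd Fourier coefficients are nonzero and the first has modulus at most $2/\pi$. This forces the odd part $\frac12\bigl(g(u)-g(u+\pi)\bigr)$ of the other factor to be a cosine of amplitude at least $\frac{\pi}{2\sqrt2}>1$.

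The genuine gap is in part (ii), where you replace the paper's citation of Bagchi--Misra ($\Gamma_2^\C(r;2,2)=1$) by a direct argument. The function $c\mapsto\sqrt{|a|^2+|b|^2+2\operatorname{Re}(a\bar b c)}$ is the square root of a real-affine function of $c$. It is therefore \emph{concave}, not convex; your realization is not affine in $c$ because of the factor $\sqrt{1-|c|^2}$. So nothing forces the supremum over the disc onto the circle. In fact your argument never uses that $A$ has only two columns, so it would equally give $\Gamma_2^\C(r;2,n)=1$ for every $n$, which is false. Take $A=\begin{pmatrix}1&1&1\\1&\omega&\omega^2\end{pmatrix}$ with $\omega=e^{2\pi i/3}$. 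The choice $x_1=e_1$, $x_2=e_2$ (that is, $c=0$) gives $3\sqrt2>4=\norm{A}_{\infty,1}$, in line with \eqref{eq:two-row-Hilbert-lower}. Likewise, ``each summand is nondecreasing in the lengths'' is false: shrinking $x_1$ increases $\norm{x_1-x_2}_2$ at $x_1=x_2$. The reduction to unit vectors should instead use that the sum is convex in each $x_i$ separately.

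The conclusion $\Gamma_2^\C(r;2,2)=1$ is nevertheless true. The missing ingredient is monotonicity combined with the fact that there are exactly two columns.
\begin{itemize}
\item Write the objective as $\Phi(c)=\sum_{j=1}^2\sqrt{\alpha_j+\ell_j(c)}$, with $\alpha_j=|a_{1j}|^2+|a_{2j}|^2$ and $\ell_j(c)=2\operatorname{Re}(a_{1j}\overline{a_{2j}}\,c)$, a real-linear functional on $\C\cong\R^2$.
\item Given $|c|<1$, pick $w\ne0$ with $\ell_1(w)=0$ and $\ell_2(w)\ge0$. Take a generator of $\ker\ell_1$ (or any vector if $\ell_1=0$) and flip its sign if necessary.
\item Move from $c$ in the direction $w$ until you reach the unit circle. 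This keeps $\ell_1$ fixed and does not decrease $\ell_2$, hence does not decrease $\Phi$.
\item At the endpoint, $x_2$ is a unimodular multiple of $x_1$, so the value is at most $\norm{A}_{\infty,1}$.
\end{itemize}
With three functionals no such common direction need exist. In the example above the three normals $1,\omega,\omega^2$ sum to zero, so $\ell_1,\ell_2,\ell_3\ge0$ forces $w=0$, and the maximum sits at the interior point $c=0$.

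With this repair (or simply by citing Bagchi--Misra, as the paper does), your proof of (ii) is complete. Your lower bound through $B_2^\C\ge1$ from Theorem~\ref{thm:B} is equivalent to the paper's explicit $H_2$ configuration, since $\mathcal F_2=H_2$.
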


\begin{proof}
Over $\R$, the order-two Grothendieck constant in
\eqref{eq:kappaGd} is
$\kappa_G^\R(2)=\sqrt2$ \cite{Krivine1979,BagchiMisra2008}.
For the real $2\times2$ Hadamard matrix
\[
H_2=
\begin{pmatrix}
1&1\\
1&-1
\end{pmatrix},
\]
one has
\[
\norm{H_2}_{\infty,1}=2,
\qquad
G_\infty^{(2)}(H_2)=4.
\]
Thus $B_2^\R=\sqrt2$. If
$d=\min\{r,m,n\}=2$, both sides of
\eqref{eq:main-bottleneck} coincide and yield
\eqref{eq:small-real-exact}.

Over $\C$, the Hilbertian Grothendieck ratio for complex
$2\times2$ matrices is $1$ \cite{BagchiMisra2008}.
Theorem~\ref{thm:A} therefore gives
\[
\G_p^\C(r;2,2)\le 2^{a(p)}.
\]
On the other hand,
\[
\norm{H_2}_{\infty,1}=2\sqrt2
\]
when $H_2$ is regarded as a complex matrix. If $p\ge2$,
choose $y_1=e_1$, $y_2=e_2$ and
\[
x_1=2^{-1/p}(1,1),
\qquad
x_2=2^{-1/p}(1,-1).
\]
These vectors are admissible in two coordinates and give
\[
G_p^{(r)}(H_2)\ge 4\,2^{-1/p}.
\]
Consequently,
\[
\G_p^\C(r;2,2)
\ge
\frac{4\,2^{-1/p}}{2\sqrt2}
=
2^{1/2-1/p}
=
2^{a(p)}.
\]
For $p\le2$, apply Lemma~\ref{lem:duality} to the square
matrix problem. This proves \eqref{eq:small-complex-exact}.
\end{proof}

\begin{proposition}[Complex two-row interior bounds]
\label{prop:two-row-interior}
Let $r\ge2$, $n\ge1$, and put
\[
K:=\min\{r,n\}.
\]
For every $1\le p\le\infty$,
\begin{equation}\label{eq:two-row-interior-lower}
\Gamma_p^\C(r;2,n)
\ge
\max\left\{
 n\sin\!\left(\frac{\pi}{2n}\right)2^{1/p-1},
 \max_{1\le k\le K}
 k^{\,1-1/p}\sin\!\left(\frac{\pi}{2k}\right)
\right\}.
\end{equation}

If $1\le p\le2$, then
\begin{equation}\label{eq:two-row-interior-minus}
\Gamma_p^\C(r;2,n)
\le
\left[
 n\sin\!\left(\frac{\pi}{2n}\right)
\right]^{2/p-1}
\Gamma_2^\C(r;2,n)^{\,2-2/p}.
\end{equation}
If $2\le p\le\infty$, then
\begin{equation}\label{eq:two-row-interior-plus}
\Gamma_p^\C(r;2,n)
\le
\Gamma_2^\C(r;2,n)^{2/p}
\left[
 K\sin\!\left(\frac{\pi}{2K}\right)
\right]^{1-2/p}.
\end{equation}

In particular,
\begin{equation}\label{eq:two-row-Hilbert-lower}
\Gamma_2^\C(r;2,n)
\ge
\frac{n}{\sqrt2}
\sin\!\left(\frac{\pi}{2n}\right).
\end{equation}
Moreover, if $n\ge3$ and $1\le p\le2$, then
\begin{equation}\label{eq:two-row-interior-separation}
\Gamma_p^\C(r;2,n)
>
\Gamma_p^\C(r;2,2).
\end{equation}
\end{proposition}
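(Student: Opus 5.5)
The plan is to specialize the general rectangular envelopes of Theorem~\ref{thm:C} to $m=2$. The two-row tensor ratios will be evaluated with Proposition~\ref{prop:complex-two-rows}, and the separation claim will be compared against Proposition~\ref{prop:small-exact}(ii). Throughout, $m=2$ and $r\ge2$, so $k_+=\min\{r,n\}=K$ and $k_-=\min\{r,2\}=2$.

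\emph{Lower bound.} First I record the endpoint values. By \eqref{eq:rho-complex-two-rows} and the symmetry of $\mathcal R_\C$, we have $\mathcal R_\C(2,k)=k\sin(\pi/(2k))$ for every $k\ge1$; with $k=n$ this also gives $\mathcal R_\C(n,2)=n\sin(\pi/(2n))$. Inserting the first value into \eqref{eq:rectangular-lower-plus} gives
\[
\Gamma_p^\C(r;2,n)\ge\max_{1\le k\le K}k^{-1/p}\,k\sin\!\left(\tfrac{\pi}{2k}\right)=\max_{1\le k\le K}k^{1-1/p}\sin\!\left(\tfrac{\pi}{2k}\right).
\]
Next I use \eqref{eq:rectangular-lower-minus}, where only $k\le k_-=2$ is allowed, and take $k=2$. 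Since $2^{-1/p'}=2^{1/p-1}$, this gives
\[
\Gamma_p^\C(r;2,n)\ge 2^{1/p-1}\,n\sin\!\left(\tfrac{\pi}{2n}\right).
\]
Together these are \eqref{eq:two-row-interior-lower}. Setting $p=2$ in the second term yields \eqref{eq:two-row-Hilbert-lower}.

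\emph{Upper bounds.} These follow by substitution into Theorem~\ref{thm:C}, which applies because $\K=\C$. For $1\le p\le2$, insert $\mathcal R_\C(n,k_-)=\mathcal R_\C(n,2)=n\sin(\pi/(2n))$ into \eqref{eq:rectangular-upper-minus}; this is \eqref{eq:two-row-interior-minus}. For $2\le p\le\infty$, insert $\mathcal R_\C(2,k_+)=K\sin(\pi/(2K))$ into \eqref{eq:rectangular-upper-plus}; this is \eqref{eq:two-row-interior-plus}. Both values come from Proposition~\ref{prop:complex-two-rows}, equivalently \eqref{eq:rho-complex-two-rows}.

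\emph{Separation.} This is the only step with any content, and I expect it to be the main obstacle. Proposition~\ref{prop:small-exact}(ii) gives $\Gamma_p^\C(r;2,2)=2^{a(p)}=2^{1/p-1/2}$ for $p\le2$. The $k=2$ lower bound above then beats this value by the factor
\[
\frac{2^{1/p-1}\,n\sin(\pi/(2n))}{2^{1/p-1/2}}=\frac{n\sin(\pi/(2n))}{\sqrt2},
\]
so it suffices to show $n\sin(\pi/(2n))>\sqrt2$ for $n\ge3$. At $n=3$ the left side is $3\sin(\pi/6)=3/2>\sqrt2$. The quantity $n\sin(\pi/(2n))=\tfrac{\pi}{2}\cdot\tfrac{\sin x}{x}$ with $x=\pi/(2n)$ is nondecreasing in $n$, because $\sin x/x$ is decreasing on $(0,\pi/2]$. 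Hence the inequality holds for all $n\ge3$, which proves the strict inequality \eqref{eq:two-row-interior-separation}.
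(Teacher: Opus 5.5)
Your proposal is correct and follows the paper's proof essentially step for step. Both branches of the lower bound come from the two lower envelopes of Theorem~\ref{thm:C}, using $k_-=2$ and the two-row values from Proposition~\ref{prop:complex-two-rows}. The upper bounds are direct substitutions into the complex interpolation envelopes. The separation compares the $k=2$ branch with $\Gamma_p^\C(r;2,2)=2^{1/p-1/2}$ from Proposition~\ref{prop:small-exact}.

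The only cosmetic difference is in that last step. You check $n=3$ explicitly and then use monotonicity of $n\sin(\pi/(2n))$, whereas the paper uses strict monotonicity starting from the value $\sqrt2$ at $n=2$.
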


\begin{proof}
Since $r\ge2$, one has
\[
k_-:=\min\{r,2\}=2.
\]
The second lower envelope in Theorem~\ref{thm:C}, now valid for every
$1\le p\le\infty$, gives
\[
\Gamma_p^\C(r;2,n)
\ge
2^{-1/p'}\mathcal R_{\C}(n,2).
\]
By symmetry of $\mathcal R_{\C}$ and
Proposition~\ref{prop:complex-two-rows},
\[
\mathcal R_{\C}(n,2)
=
\mathcal R_{\C}(2,n)
=
n\sin\!\left(\frac{\pi}{2n}\right).
\]
Since
\[
-\frac1{p'}=\frac1p-1,
\]
we obtain the first branch in
\eqref{eq:two-row-interior-lower}.

Likewise, the first lower envelope in Theorem~\ref{thm:C} gives, for
every $1\le k\le K$ and every $1\le p\le\infty$,
\[
\Gamma_p^\C(r;2,n)
\ge
k^{-1/p}\mathcal R_{\C}(2,k).
\]
Using again Proposition~\ref{prop:complex-two-rows},
\[
\mathcal R_{\C}(2,k)
=
k\sin\!\left(\frac{\pi}{2k}\right),
\]
so
\[
\Gamma_p^\C(r;2,n)
\ge
k^{1-1/p}\sin\!\left(\frac{\pi}{2k}\right).
\]
Taking the maximum over $1\le k\le K$ proves the second branch of
\eqref{eq:two-row-interior-lower}.

For $1\le p\le2$, the complex upper envelope in
Theorem~\ref{thm:C} gives
\[
\Gamma_p^\C(r;2,n)
\le
\mathcal R_{\C}(n,2)^{\,2/p-1}
\Gamma_2^\C(r;2,n)^{\,2-2/p},
\]
which is \eqref{eq:two-row-interior-minus}. For $2\le p\le\infty$,
the other complex upper envelope gives
\begin{align*}
\Gamma_p^\C(r;2,n)
&\le
\Gamma_2^\C(r;2,n)^{2/p}
\mathcal R_{\C}(2,K)^{1-2/p}\\
&=
\Gamma_2^\C(r;2,n)^{2/p}
\left[
K\sin\!\left(\frac{\pi}{2K}\right)
\right]^{1-2/p},
\end{align*}
which is \eqref{eq:two-row-interior-plus}.

Setting $p=2$ in the first branch of
\eqref{eq:two-row-interior-lower} gives
\eqref{eq:two-row-Hilbert-lower}.

Finally, for $n\ge3$ the function
\[
q\longmapsto
q\sin\!\left(\frac{\pi}{2q}\right)
\]
is strictly increasing for $q\ge1$. Hence, when $1\le p\le2$,
\begin{align*}
\Gamma_p^\C(r;2,n)
&\ge
n\sin\!\left(\frac{\pi}{2n}\right)2^{1/p-1}\\
&>
\sqrt2\,2^{1/p-1}\\
&=
2^{1/p-1/2}\\
&=
\Gamma_p^\C(r;2,2),
\end{align*}
where the last equality follows from
Proposition~\ref{prop:small-exact}.
\end{proof}

\begin{corollary}[Fully explicit complex two-row upper bounds]
\label{cor:two-row-explicit-upper}
Let $r\ge2$, $n\ge1$, and $K=\min\{r,n\}$. Then, for
$1\le p\le2$,
\begin{equation}\label{eq:two-row-explicit-upper-minus}
\Gamma_p^\C(r;2,n)
\le
\left[
 n\sin\!\left(\frac{\pi}{2n}\right)
\right]^{2/p-1}
\bigl[\kappa_G^\C(2)\bigr]^{2-2/p},
\end{equation}
and, for $2\le p\le\infty$,
\begin{equation}\label{eq:two-row-explicit-upper-plus}
\Gamma_p^\C(r;2,n)
\le
\bigl[\kappa_G^\C(2)\bigr]^{2/p}
\left[
 K\sin\!\left(\frac{\pi}{2K}\right)
\right]^{1-2/p}.
\end{equation}
\end{corollary}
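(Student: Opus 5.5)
The plan is to substitute a dimension-free bound for the Hilbertian factor $\Gamma_2^\C(r;2,n)$ in Proposition~\ref{prop:two-row-interior}. I would take \eqref{eq:two-row-interior-minus} for $1\le p\le2$ and \eqref{eq:two-row-interior-plus} for $2\le p\le\infty$ as given. The only task is then to show
\[
\Gamma_2^\C(r;2,n)\le\kappa_G^\C(2)
\]
and insert it into the two bounds. Both exponents, $2-2/p$ in the first bound and $2/p$ in the second, are nonnegative on their ranges. Hence $x\mapsto x^{2-2/p}$ and $x\mapsto x^{2/p}$ are nondecreasing on $[0,\infty)$, so the substitution preserves the inequalities. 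The remaining factors involving $n\sin(\pi/(2n))$ and $K\sin(\pi/(2K))$ are left unchanged.

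To prove $\Gamma_2^\C(r;2,n)\le\kappa_G^\C(2)$, I would use Theorem~\ref{thm:A}. Put $d=\min\{r,2,n\}$. Then \eqref{eq:Hilbert-saturation} gives $G_2^{(r)}(A)=G_2^{(d)}(A)$ for every $A\in\C^{2\times n}$. Dividing by $\norm{A}_{\infty,1}$ and taking the supremum yields $\Gamma_2^\C(r;2,n)=\Gamma_2^\C(d;2,n)$. By definition \eqref{eq:kappaGd}, this is at most $\kappa_G^\C(d)$. Since $d\le2$ and $\kappa_G^\C$ is nondecreasing, $\kappa_G^\C(d)\le\kappa_G^\C(2)$.

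The edge case $n=1$, where $d=1$, needs no separate treatment: the monotonicity of $\kappa_G^\C$ covers it, and in any case Corollary~\ref{cor:d1} gives the value $1$ directly. The middle inequality of \eqref{eq:main-bottleneck} at $p=2$ would give the same bound in one line.

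I expect no genuine obstacle. The one step to check is the claimed monotonicity of $\kappa_G^\C$. It follows from Lemma~\ref{lem:padding}: $\Gamma_2^\C(d;m,n)\le\Gamma_2^\C(d';m,n)$ whenever $d\le d'$, and taking the supremum over $m,n$ gives $\kappa_G^\C(d)\le\kappa_G^\C(d')$.
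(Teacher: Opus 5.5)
Your proposal is correct and follows the paper's proof: bound $\Gamma_2^\C(r;2,n)\le\kappa_G^\C(2)$ using the Hilbertian stabilization of Theorem~\ref{thm:A}, then substitute it into \eqref{eq:two-row-interior-minus} and \eqref{eq:two-row-interior-plus}; your checks that the exponents are nonnegative and that $\kappa_G^\C$ is monotone (via padding) only make explicit what the paper leaves implicit. One small slip: the one-line alternative you mention comes from the rightmost inequality of \eqref{eq:main-bottleneck}, not the middle one, since the middle one is a tautology at $p=2$.
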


\begin{proof}
By Theorem~\ref{thm:A}, the Hilbertian stabilization for two-row
matrices gives
\[
\Gamma_2^\C(r;2,n)
\le
\kappa_G^\C(2).
\]
Substitution into
\eqref{eq:two-row-interior-minus} and
\eqref{eq:two-row-interior-plus} proves the two estimates.
\end{proof}

\begin{remark}\label{rem:two-row-intermediate-k}
The first branch inside the maximum in
\eqref{eq:two-row-interior-lower} and the optimized $k$-branch arise
from the two different rectangular endpoint mechanisms in
Theorem~\ref{thm:C}. In particular, for $2<p<\infty$ the best endpoint
dimension in the $k$-branch may be an intermediate value
\[
1<k<K.
\]
The continuous analogue is
\[
f_p(x):=
x^{1-1/p}\sin\!\left(\frac{\pi}{2x}\right),
\qquad x\ge1.
\]
Writing
\[
z=\frac{\pi}{2x},
\]
an interior critical point satisfies
\begin{equation}\label{eq:two-row-continuous-critical}
z\cot z=1-\frac1p.
\end{equation}
For $p>1$ this equation has a unique solution
$z_p\in(0,\pi/2)$. Moreover, as $p\to\infty$,
\[
z\cot z
=1-\frac{z^2}{3}+O(z^4),
\]
so
\[
z_p\sim\sqrt{\frac3p}
\qquad\text{and hence}\qquad
x_p:=\frac{\pi}{2z_p}
\sim
\frac{\pi}{2\sqrt3}\sqrt p.
\]
Thus, whenever $K$ is sufficiently large relative to $\sqrt p$, the
strongest lower bound coming from the $k$-branch is generated by an
intermediate endpoint dimension on the natural scale $k\asymp\sqrt p$.
\end{remark}

\begin{remark}[Hilbertian and endpoint saturation differ]
\label{rem:rectangular-endpoint-saturation}
Let $\omega=e^{2\pi i/3}$ and consider
\[
A=
\begin{pmatrix}
1&1&1\\
1&\omega&\omega^2
\end{pmatrix}
\in\C^{2\times3}.
\]
Then
\begin{equation}\label{eq:rectangular-endpoint-example}
G_\infty^{(2)}(A)=2+2\sqrt3
<6=G_\infty^{(3)}(A).
\end{equation}
Indeed, the two rows of $A$ are orthogonal and have squared
Euclidean norm $3$, so $AA^*=3I_2$. By
\eqref{eq:rectangular-infinity-exact}, an assignment of the three
columns to two coordinates either places all three in one group
or places two in one group and the remaining column in the other.

In the first case, for $z\in\mathbb T_{\C}^3$,
\[
\|Az\|_1
\le\sqrt2\,\|Az\|_2
\le\sqrt2\,\|A\|_{2\to2}\|z\|_2
=3\sqrt2
<2+2\sqrt3.
\]
In the second case, the singleton column contributes $2$.
If $a_j,a_k$ are the two columns in the other group, then
$\|a_j\|_2^2=\|a_k\|_2^2=2$ and $|a_j^*a_k|=1$, where $a_j^*a_k$
denotes the usual Hermitian inner product on $\C^2$. Hence, for
$\zeta_j,\zeta_k\in\mathbb T_{\C}$,
\[
\|\zeta_j a_j+\zeta_k a_k\|_1
\le\sqrt2\,\|\zeta_j a_j+\zeta_k a_k\|_2
\le\sqrt2\,\sqrt{4+2}=2\sqrt3.
\]
This bound is attained by grouping the first two columns and
choosing $\zeta_1=1$, $\zeta_2=e^{-i\pi/3}$: both coordinates
of $\zeta_1a_1+\zeta_2a_2$ have modulus $\sqrt3$.
This proves the first equality in
\eqref{eq:rectangular-endpoint-example}. The second follows
from \eqref{eq:rectangular-infinity-saturation}, since $r=3=n$.

On the other hand, Theorem~\ref{thm:A} gives
$G_2^{(2)}(A)=G_2^{(3)}(A)$, because
$\min\{2,m,n\}=2$. Thus the Hilbertian stabilization at the
effective dimension need not be a pointwise stabilization of
$G_\infty^{(r)}(A)$.
\end{remark}

\begin{lemma}\label{lem:Hadamard-truncation}
Let $h\ge d$ and suppose that there exists a real Hadamard matrix $H$ of order $h$. If $A$ is any $d\times d$ corner of $H$, then
\begin{equation}\label{eq:corner-infty1}
 \norm{A}_{\infty,1}\le d\sqrt h.
\end{equation}
Therefore,
\begin{equation}\label{eq:corner-B}
 B_d^{\R}\ge\sqrt{\frac dh}.
\end{equation}
\end{lemma}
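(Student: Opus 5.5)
Write $A$ for a $d\times d$ corner of $H$, and use the bilinear form in \eqref{eq:matrix-norm}: for real $s,t$ with $\norm{s}_\infty,\norm{t}_\infty\le1$ we have $\sum_{i,j}a_{ij}s_it_j=\pair{s}{At}$. The plan is to bound $\norm{A}_{\infty,1}$ through the Euclidean operator norm of $A$, which a corner of $H$ inherits. Since $HH^T=hI_h$, every singular value of $H$ equals $\sqrt h$, so $\norm{H}_{2\to2}=\sqrt h$. A corner is $A=P H Q$ with $P,Q$ coordinate projections/inclusions of norm at most one, hence $\norm{A}_{2\to2}\le\sqrt h$.

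For \eqref{eq:corner-infty1}, take $s,t\in\R^d$ with $\norm{s}_\infty,\norm{t}_\infty\le1$. Then $\norm{s}_2,\norm{t}_2\le\sqrt d$, and Cauchy--Schwarz gives
\[
\abs{\pair{s}{At}}\le\norm{s}_2\norm{A}_{2\to2}\norm{t}_2\le d\sqrt h .
\]
Taking the supremum over $s$ and $t$ yields $\norm{A}_{\infty,1}\le d\sqrt h$.

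For \eqref{eq:corner-B}, every entry of $A$ is $\pm1$. By \eqref{eq:G-infinity-exact} in Lemma~\ref{lem:rectangular-infinity}, $G_\infty^{(d)}(A)=\sum_{i,j}|a_{ij}|=d^2$. This gives
\[
\G_\infty^{\R}(d;d,d)\ge\frac{d^2}{d\sqrt h}=\frac{d}{\sqrt h},
\]
and dividing by $\sqrt d$ as in \eqref{eq:Bd} gives $B_d^{\R}\ge\sqrt{d/h}$. Since $A\neq0$, it is an admissible test matrix in \eqref{eq:Gamma-rmn}.

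No step is really an obstacle. The only point that needs care is that the operator-norm bound passes to corners, and this holds because restricting $H$ to a submatrix cannot increase its norm.
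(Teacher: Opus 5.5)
Your proof is correct and is essentially the paper's argument. The paper writes the same estimate in operator form, $\norm{Az}_1\le\sqrt d\,\norm{Az}_2\le\sqrt d\,\norm{H\widetilde z}_2=\sqrt{dh}\,\norm{\widetilde z}_2\le d\sqrt h$ with $\widetilde z$ the zero-padded vector, which is the dual form of your bound $\abs{\pair{s}{At}}\le\norm{s}_2\norm{A}_{2\to2}\norm{t}_2$; the deduction of $B_d^{\R}\ge\sqrt{d/h}$ from $G_\infty^{(d)}(A)=d^2$ is identical.
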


\begin{proof}
Let $z\in\R^d$ satisfy $\norm{z}_\infty\le1$ and extend it by zero to $\widetilde z\in\R^h$. If $P:\R^h\to\R^d$ denotes the coordinate projection corresponding to the selected rows, then
\[
 Az=PH\widetilde z.
\]
Since $HH^T=hI_h$,
\begin{align*}
 \norm{Az}_1
 &\le \sqrt d\,\norm{Az}_2
 \le \sqrt d\,\norm{H\widetilde z}_2\\
 &=\sqrt{dh}\,\norm{\widetilde z}_2
 \le d\sqrt h.
\end{align*}
Taking the supremum over $z$ proves \eqref{eq:corner-infty1}. Every entry of $A$ has modulus one, so \eqref{eq:G-infinity-exact} gives $G_\infty^{(d)}(A)=d^2$. Therefore
\[
 \G_\infty^{\R}(d;d,d)
 \ge\frac{d^2}{d\sqrt h}
 =\frac d{\sqrt h},
\]
and division by $\sqrt d$ proves \eqref{eq:corner-B}.
\end{proof}

\begin{proof}[\textbf{Proof of Theorem \ref{thm:B}}]
For the real field, apply Lemma \ref{lem:Hadamard-truncation} with
$h=h_+(d)$ to obtain
\begin{equation}\label{eq:real-hplus-lower}
B_d^{\R}\ge\sqrt{\frac{d}{h_+(d)}}=\beta_d^{\R}.
\end{equation}
To make the quantitative estimate explicit, write the Hadamard orders
in increasing order as $h_1<h_2<\cdots$ and define the Hadamard gap
function $\gamma(x):=\max\{h_{i+1}-h_i:h_i\le x\}$ for $x\ge0$.
Livinskyi's asymptotic existence result, in the form recorded by
Brent, Osborn and Smith \cite{BrentOsbornSmith2016}, implies that the gaps
between consecutive Hadamard orders satisfy
\begin{equation}\label{eq:Hadamard-gap-function}
\gamma(x)=O(x^{1/6})\qquad\text{as }x\to\infty.
\end{equation}
If $d$ is not itself a Hadamard order, choose consecutive Hadamard
orders $h_i<d<h_{i+1}$. Then $h_{i+1}=h_+(d)$ and
\[
0\le h_+(d)-d\le h_{i+1}-h_i\le\gamma(d)=O(d^{1/6}).
\]
If $d$ is already a Hadamard order, then $h_+(d)=d$ and $h_+(d)-d=0$.
Thus, unconditionally,
\begin{equation}\label{eq:Hadamard-gap-rate}
h_+(d)-d=O(d^{1/6}).
\end{equation}
Setting $x_d:=(h_+(d)-d)/d$, we have $x_d=O(d^{-5/6})$. Since $x_d\ge0$,
\begin{align*}
1-\beta_d^{\R}
&=1-(1+x_d)^{-1/2}\\
&=\frac{(1+x_d)-1}{\sqrt{1+x_d}\,(1+\sqrt{1+x_d})}\\
&=\frac{x_d}{\sqrt{1+x_d}\,(1+\sqrt{1+x_d})}\\
&\le\frac{x_d}{2}
=O(d^{-5/6}).
\end{align*}
If $d$ is a Hadamard order, $h_+(d)=d$ and \eqref{eq:real-hplus-lower}
gives $B_d^{\R}\ge1$ identically.

The gap estimate \eqref{eq:Hadamard-gap-rate} is needed only to obtain the quantitative factor $1-O(d^{-5/6})$.
The qualitative equivalence \eqref{eq:rectangular-order} requires only
elementary means: letting $2^k$ be the smallest power of two with
$2^k\ge d$, Sylvester's classical construction provides an explicit
Hadamard matrix of order $2^k\le 2d$. Hence $h_+(d)\le 2d$, which
immediately yields the universal lower bound $\beta_d^{\R}\ge 1/\sqrt2$
without invoking any results from analytic number theory.

For the complex field, let $\mathcal F_d=(\omega^{(j-1)(k-1)})_{j,k=1}^d$,
where $\omega=e^{2\pi i/d}$, be the Fourier matrix of order $d$.
Every entry of $\mathcal F_d$ has modulus one, and
$\mathcal F_d\mathcal F_d^*=dI_d$. For any $z\in\C^d$ with
$\norm z_\infty\le1$,
\begin{align*}
\norm{\mathcal F_d z}_1
&\le \sqrt d\,\norm{\mathcal F_d z}_2
 =\sqrt d\,\norm{\mathcal F_d}_{2\to2}\norm z_2\\
&=d\,\norm z_2
 \le d^{3/2}\norm z_\infty
 \le d^{3/2}.
\end{align*}
Taking the supremum over $\norm z_\infty\le1$ gives
$\norm{\mathcal F_d}_{\infty,1}\le d^{3/2}$. Together with
\eqref{eq:G-infinity-exact}, we obtain
\[
\G_\infty^{\C}(d;d,d)\ge\frac{d^2}{d^{3/2}}=\sqrt d,
\qquad\text{so}\quad B_d^{\C}\ge1.
\]
This completes the proof of \eqref{eq:Bd-lower} and \eqref{eq:beta-rate}.
Substituting \eqref{eq:Bd-lower} into Theorem \ref{thm:A} yields
\eqref{eq:uniform-rectangular}. Since $\kappa_G^{\K}(d)\le K_G^{\K}$, the
constants in \eqref{eq:rectangular-order} remain bounded independently
of $r,m,n$ and $p$.
\end{proof}

\begin{proof}[Proof of Corollary~\ref{cor:fixed-p-asymptotic}]
Let
\[
a(p)=\abs{\frac1p-\frac12}>0.
\]
By Theorem~\ref{thm:B},
\[
\beta_d^{\K}d^{a(p)}
\le
\Gamma_p^{\K}(r;m,n)
\le
\kappa_G^{\K}(d)d^{a(p)}.
\]
Taking logarithms gives
\[
a(p)\log d+\log\beta_d^{\K}
\le
\log\Gamma_p^{\K}(r;m,n)
\le
a(p)\log d+\log \kappa_G^{\K}(d).
\]
Since
\[
\beta_d^{\K}\longrightarrow1
\]
and
\[
1\le \kappa_G^{\K}(d)\le K_G^{\K},
\]
we have
\[
\frac{\log\beta_d^{\K}}{\log d}\longrightarrow0,
\qquad
\frac{\log \kappa_G^{\K}(d)}{\log d}\longrightarrow0.
\]
Hence
\[
\frac{\log\Gamma_p^{\K}(r;m,n)}{\log d}
\longrightarrow
a(p),
\]
which proves \eqref{eq:visual-log-asymptotic}.

Equivalently,
\[
\log\Gamma_p^{\K}(r;m,n)
=
\bigl(a(p)+o(1)\bigr)\log d,
\]
and therefore
\[
\Gamma_p^{\K}(r;m,n)
=
d^{a(p)+o(1)}.
\]
For fixed $p$, the error depends only on $d$ and is therefore uniform
over all triples $(r,m,n)$ satisfying
$\min\{r,m,n\}=d$.
\end{proof}

\begin{proof}[Proof of Proposition \ref{prop:endpoint}]
Identity \eqref{eq:endpoint-rho} is the square case of
Proposition~\ref{prop:endpoint-compression}.

Over $\R$, Proposition~12 of \cite{AubrunLamiPalazuelosSzarekWinter2020} gives
\[
 \rho(\ell_1^d,X)=\pi_1^{(d)}(I_X).
\]
For $X=\ell_1^d$, their formula is
\begin{equation}\label{eq:pi1-Md}
 \pi_1(I_{\ell_1^d})=\frac d{M_d},
\end{equation}
where $M_d$ is the finite average in \eqref{eq:Md-def}. Since
$\pi_1^{(d)}(I_{\ell_1^d})\le\pi_1(I_{\ell_1^d})$, we obtain
\begin{align*}
 B_d^{\R}
 &=\frac1{\sqrt d}\pi_1^{(d)}(I_{\ell_1^d})\\
 &\le\frac1{\sqrt d}\pi_1(I_{\ell_1^d})\\
 &=\frac{\sqrt d}{M_d}.
\end{align*}
The average $M_d$ has an elementary closed form. If $d=2m$, grouping sign vectors according to the number $k$ of negative coordinates gives
\begin{align*}
 M_{2m}
 &=2^{-2m}\sum_{k=0}^{2m}|2m-2k|\binom{2m}{k}\\
 &=2^{1-2m}\sum_{k=0}^{m-1}(2m-2k)\binom{2m}{k}\\
 &=2^{2-2m}\sum_{k=0}^{m-1}(m-k)\binom{2m}{k}.
\end{align*}
For $0\le k\le m-1$,
\[
 (m-k)\binom{2m}{k}
 =m\left[\binom{2m-1}{k}-\binom{2m-1}{k-1}\right],
\]
where $\binom{2m-1}{-1}=0$. The sum telescopes, and therefore
\begin{align}
 M_{2m}
 &=2^{2-2m}m\binom{2m-1}{m-1}\notag\\
 &=\frac{2m}{2^{2m}}\binom{2m}{m}.
 \label{eq:Md-even}
\end{align}
If $d=2m+1$, symmetry around $k=m+\frac12$ gives
\begin{align*}
 M_{2m+1}
 &=2^{-2m}\sum_{k=0}^{m}(2m+1-2k)\binom{2m+1}{k}.
\end{align*}
For $0\le k\le m$,
\[
 (2m+1-2k)\binom{2m+1}{k}
 =(2m+1)\left[\binom{2m}{k}-\binom{2m}{k-1}\right].
\]
A second telescoping sum gives
\begin{equation}\label{eq:Md-odd}
 M_{2m+1}=\frac{2m+1}{2^{2m}}\binom{2m}{m}.
\end{equation}
By Stirling's formula,
\[
 \binom{2m}{m}
 =\frac{4^m}{\sqrt{\pi m}}\bigl(1+O(m^{-1})\bigr).
\]
Substitution into \eqref{eq:Md-even} and \eqref{eq:Md-odd} yields, in both parity classes,
\begin{equation}\label{eq:Md-asymptotic}
 M_d=\sqrt{\frac{2d}{\pi}}\bigl(1+O(d^{-1})\bigr).
\end{equation}
Therefore
\[
 \limsup_{d\to\infty}B_d^{\R}
 \le
 \lim_{d\to\infty}\frac{\sqrt d}{M_d}
 =\sqrt{\frac\pi2},
\]
which proves \eqref{eq:endpoint-real-limsup}.

Over $\C$, let {$\mathbb T:=\{z\in\C:|z|=1\}$}, and let {$\sigma_d$} denote normalized Haar measure on $\mathbb T^d$. For every $z\in\mathbb T^d$ one has $\norm z_\infty=1$, and hence
\[
 \norm{A}_{\infty,1}\ge\norm{Az}_1.
\]
Integrating this pointwise inequality over $\mathbb T^d$ gives
\begin{align*}
 \norm{A}_{\infty,1}
 &\ge \int_{\mathbb T^d}\norm{Az}_1\,{d\sigma_d(z)}\\
 &=\sum_{i=1}^d
   \int_{\mathbb T^d}\abs{\sum_{j=1}^d a_{ij}z_j}\,{d\sigma_d(z)}.
\end{align*}
Sawa's sharp $L_1$ Steinhaus inequality \cite{Sawa1985} gives, row by row,
\[
 \int_{\mathbb T^d}\abs{\sum_{j=1}^d a_{ij}z_j}\,{d\sigma_d(z)}
 \ge\frac{\sqrt\pi}{2}
 \left(\sum_{j=1}^d|a_{ij}|^2\right)^{1/2}.
\]
Therefore
\begin{align*}
 \norm{A}_{\infty,1}
 &\ge\frac{\sqrt\pi}{2}
 \sum_{i=1}^d\left(\sum_{j=1}^d|a_{ij}|^2\right)^{1/2}\\
 &\ge\frac{\sqrt\pi}{2\sqrt d}
 \sum_{i=1}^d\sum_{j=1}^d|a_{ij}|,
\end{align*}
where the last inequality follows from
\[
 \sum_{j=1}^d|a_{ij}|
 \le\sqrt d\left(\sum_{j=1}^d|a_{ij}|^2\right)^{1/2}
\]
for each row $i$. Hence
\[
 \frac{G_\infty^{(d)}(A)}{\norm A_{\infty,1}}
 \le\frac{2\sqrt d}{\sqrt\pi}.
\]
Taking the supremum over $A\ne0$ and dividing by $\sqrt d$ gives
$B_d^{\C}\le2/\sqrt\pi$. The lower bound $B_d^{\C}\ge1$ was proved in Theorem \ref{thm:B}.
\end{proof}

\section{Proof of Theorem D}

We first prove the Hilbertian lower bound needed in the critical case.

\begin{lemma}\label{lem:fixed-Hilbert-extremizers}
Suppose $r_N,m_N,n_N\to\infty$ and $p_N\to2$. Then
\begin{equation}\label{eq:liminf-KG}
 \liminf_{N\to\infty}
 \G_{p_N}^{\K}(r_N;m_N,n_N)
 \ge K_G^{\K}.
\end{equation}
\end{lemma}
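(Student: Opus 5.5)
The plan is to fix a single near-extremal matrix for the Hilbertian problem and show that its ratio is almost preserved when $p$ is close to $2$.

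\textbf{Choosing a near-extremizer.} Fix $\eps>0$. Since $\kappa_G^{\K}(d)\uparrow K_G^{\K}$ by \eqref{eq:kappaGd-limit}, and by the definition \eqref{eq:kappaGd}, there are integers $d_0,m_0,n_0$ and a nonzero matrix $A\in\K^{m_0\times n_0}$ with
\[
G_2^{(d_0)}(A)\ge (K_G^{\K}-\eps)\norm{A}_{\infty,1}.
\]
Choose vectors $x_1,\ldots,x_{m_0},y_1,\ldots,y_{n_0}\in B_{\ell_2^{d_0}}$ that realize this value up to $\eps$. These data are fixed, finite-dimensional, and do not depend on $N$.

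\textbf{Transferring to $p_N$.} For $N$ large we have $r_N\ge d_0$, $m_N\ge m_0$ and $n_N\ge n_0$. Zero padding (Lemma~\ref{lem:padding}) then gives $\G_{p_N}^{\K}(r_N;m_N,n_N)\ge \G_{p_N}^{\K}(d_0;m_0,n_0)$. To make the fixed Hilbertian configuration admissible for exponent $p_N$, rescale. In $\K^{d_0}$ one has
\[
\norm{x}_{p}\le d_0^{a(p)}\norm{x}_2 \quad\text{and}\quad \norm{y}_{p'}\le d_0^{a(p)}\norm{y}_2 .
\]
Only one of these needs a nontrivial factor: for $p\ge2$ the bound $\norm{x}_p\le\norm{x}_2$ holds with no loss, while $\norm{y}_{p'}\le d_0^{1/p'-1/2}\norm{y}_2$. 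Hence the vectors $x_i$ and $d_0^{-a(p_N)}y_j$ are admissible for the $p_N$ problem. This gives
\[
G_{p_N}^{(d_0)}(A)\ge d_0^{-a(p_N)}\bigl(G_2^{(d_0)}(A)-\eps\bigr).
\]
The same argument with the roles of $x$ and $y$ exchanged handles $p_N<2$.

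\textbf{Conclusion.} As $p_N\to2$ we have $a(p_N)\to0$, and $d_0$ is fixed, so $d_0^{-a(p_N)}\to1$. Therefore
\[
\liminf_N\G_{p_N}^{\K}(r_N;m_N,n_N)\ge \frac{G_2^{(d_0)}(A)-\eps}{\norm{A}_{\infty,1}}\ge K_G^{\K}-\eps-\frac{\eps}{\norm{A}_{\infty,1}}.
\]
Since the choice of the near-extremizer depends on $\eps$, the clean way to close is to choose the configuration to approximate $G_2^{(d_0)}(A)$ within $\eps\norm{A}_{\infty,1}$ rather than within $\eps$. The right-hand side is then at least $K_G^{\K}-2\eps$, and letting $\eps\to0$ gives \eqref{eq:liminf-KG}.

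The only mildly delicate point is the order of quantifiers: the dimension $d_0$ must be fixed before letting $N\to\infty$. This is what makes the factor $d_0^{-a(p_N)}$ harmless here. The critical case of Theorem~D, where $d$ varies with $N$, is handled separately.
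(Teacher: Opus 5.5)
Your proposal is correct and is essentially the paper's argument: you fix a finite near-extremal Hilbertian configuration in a fixed dimension $d_0$ (the paper's $r_0$), and the fixed-dimension norm comparison makes it admissible for the $p_N$-problem at a cost $d_0^{-a(p_N)}\to1$ (the paper writes this as a product $c_Nc_N'$ in which one factor equals $1$). You then zero-pad into the $(r_N;m_N,n_N)$ problem and let $N\to\infty$ before $\eps\downarrow0$; approximating within $\eps\norm{A}_{\infty,1}$, or using a strict inequality as the paper does, closes the argument correctly.
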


\begin{proof}
Fix $\eps>0$. By the definition of $K_G^{\K}$, there are integers $r_0,m_0,n_0$, a matrix $A\in\K^{m_0\times n_0}$, and vectors
\[
 u_1,\ldots,u_{m_0},
 v_1,\ldots,v_{n_0}\in B_{\ell_2^{r_0}}
\]
such that
\begin{equation}\label{eq:fixed-near-KG}
 \abs{\sum_{i,j}a_{ij}\pair{u_i}{v_j}}
 >(K_G^{\K}-\eps)\norm{A}_{\infty,1}.
\end{equation}
For large $N$, all three dimensions $r_N,m_N,n_N$ dominate $r_0,m_0,n_0$. Put
\[
 \alpha_N:=\max\left\{\frac1{p_N}-\frac12,0\right\},
 \qquad
 \alpha_N':=\max\left\{\frac1{p_N'}-\frac12,0\right\}.
\]
For every $u\in\K^{r_0}$, the finite-dimensional norm comparison gives
\[
 \norm u_{p_N}\le r_0^{\alpha_N}\norm u_2,
\]
and similarly
\[
 \norm v_{p_N'}\le r_0^{\alpha_N'}\norm v_2.
\]
Since $p_N\to2$, both $\alpha_N$ and $\alpha_N'$ tend to zero. Set
\[
 c_N:=r_0^{\alpha_N},
 \qquad
 c_N':=r_0^{\alpha_N'}.
\]
Then $c_N,c_N'\to1$, and the vectors $u_i/c_N$ and $v_j/c_N'$ are admissible in the $p_N$-problem after zero padding. Therefore \eqref{eq:fixed-near-KG} gives
\[
 \G_{p_N}^{\K}(r_N;m_N,n_N)
 \ge\frac{K_G^{\K}-\eps}{c_Nc_N'}.
\]
Letting $N\to\infty$ and then $\eps\downarrow0$ proves \eqref{eq:liminf-KG}.
\end{proof}

\begin{proof}[Proof of Theorem \ref{thm:D}]
Theorem \ref{thm:B} gives
\begin{equation}\label{eq:tau-master-rect}
 \beta_{d_N}^{\K}e^{\tau_N}
 \le
 \G_{p_N}^{\K}(r_N;m_N,n_N)
 \le
 \kappa_G^{\K}(d_N)e^{\tau_N}
 \le K_G^{\K}e^{\tau_N}.
\end{equation}
Suppose first that $\sup_N\tau_N<\infty$. The upper bound in \eqref{eq:tau-master-rect} gives
\[
 \sup_N\G_{p_N}^{\K}(r_N;m_N,n_N)
 \le K_G^{\K}\exp(\sup_N\tau_N)<\infty.
\]
Conversely, if $(\tau_N)$ is unbounded, choose a subsequence $(N_k)$ with $\tau_{N_k}\to\infty$. Since $\beta_{d_{N_k}}^{\K}\to1$, the lower bound in \eqref{eq:tau-master-rect} gives
\[
 \G_{p_{N_k}}^{\K}(r_{N_k};m_{N_k},n_{N_k})
 \ge \beta_{d_{N_k}}^{\K}e^{\tau_{N_k}}
 \longrightarrow\infty.
\]
This proves (i).

If $\tau_N\to0$, then $a(p_N)\to0$, hence $p_N\to2$. Since $d_N\to\infty$, all three dimensions tend to infinity. Lemma \ref{lem:fixed-Hilbert-extremizers} gives
\[
 \liminf_N\G_{p_N}^{\K}(r_N;m_N,n_N)
 \ge K_G^{\K}.
\]
The upper estimate in \eqref{eq:tau-master-rect}, together with \eqref{eq:kappaGd-limit}, gives
\[
 \limsup_N\G_{p_N}^{\K}(r_N;m_N,n_N)
 \le K_G^{\K}.
\]
This proves (ii).

If $\tau_N\to\infty$, take logarithms in \eqref{eq:tau-master-rect}:
\[
 1+\frac{\log\beta_{d_N}^{\K}}{\tau_N}
 \le
 \frac{\log\G_{p_N}^{\K}(r_N;m_N,n_N)}{\tau_N}
 \le
 1+\frac{\log K_G^{\K}}{\tau_N}.
\]
Since $\beta_{d_N}^{\K}\to1$, both error terms tend to zero, proving (iii).

Finally assume $\tau_N\to\tau<\infty$. Again $p_N\to2$, so Lemma \ref{lem:fixed-Hilbert-extremizers} gives the lower bound $K_G^{\K}$. The lower half of Theorem \ref{thm:A} gives
\[
 \G_{p_N}^{\K}(r_N;m_N,n_N)
 \ge B_{d_N}^{\K}e^{\tau_N},
\]
and therefore
\[
 \liminf_N\G_{p_N}^{\K}(r_N;m_N,n_N)
 \ge e^\tau\underline B_{\K}.
\]
The upper bound in \eqref{eq:tau-master-rect} gives the last inequality in \eqref{eq:critical-envelope}. Theorem \ref{thm:B} gives $\underline B_{\K}\ge1$.

The critical scale \eqref{eq:critical-scale} is the boundedness threshold from (i). If $p_N\to2$, then
\[
 \frac1{p_N}-\frac12
 =-\frac{p_N-2}{2p_N}
 =-\frac{p_N-2}{4}+o(|p_N-2|),
\]
which gives the equivalent formulation in terms of $|p_N-2|$.
\end{proof}

\section{The complex diagonal profile}

Theorem~\ref{thm:C} uses complex interpolation between the exact
rectangular endpoints and the Hilbertian problem. In the square case
one can retain considerably more information than a single interpolated
upper bound: the entire passage through the critical window can be
encoded by a convex profile.

We therefore restrict in this section to $\K=\C$ and to square
parameters. For a fixed matrix $A\in\C^{d\times d}$, Lemma
\ref{lem:operator-realization} realizes
\[
G_p^{(d)}(A)
\]
as the norm of
\[
T_A^{(p)}:
\ell_\infty^d(\ell_p^d)
\longrightarrow
\ell_1^d(\ell_p^d).
\]
Consequently, complex interpolation makes
\[
\frac1p\longmapsto\log G_p^{(d)}(A)
\]
convex. The corresponding multiplicative estimates remain valid after
division by $\|A\|_{\infty,1}$ and passage to the supremum over $A$.

For $d\ge2$ put
\[
L_d=\frac12\log d
\]
and, for $|s|\le L_d$, define
\[
\frac1{p_d(s)}
=
\frac12-\frac{s}{\log d},
\qquad
F_d(s)
=
\log\Gamma_{p_d(s)}^\C(d;d,d).
\]
Thus $s=0$ is the Hilbertian point and
$s=L_d$ is the endpoint $p=\infty$.
The next proposition describes the additional shape information
contained in this one-dimensional profile.

\begin{proof}[Proof of Proposition \ref{prop:complex-profile}]
Since $p_d(-s)=p_d(s)'$, Lemma \ref{lem:duality} gives $F_d(-s)=F_d(s)$. Complex interpolation shows that $F_d$ is convex because $s\mapsto1/p_d(s)$ is affine.

For $1\le p,q\le\infty$, direct comparison of the $\ell_p^d$ norms gives
\begin{equation}\label{eq:dimension-comparison-diagonal}
 d^{-|1/p-1/q|}\G_q^{\C}(d;d,d)
 \le
 \G_p^{\C}(d;d,d)
 \le
 d^{|1/p-1/q|}\G_q^{\C}(d;d,d).
\end{equation}
Applying this to $p=p_d(s)$ and $q=p_d(t)$ yields
\[
 |F_d(s)-F_d(t)|\le|s-t|,
\]
so $F_d$ is $1$-Lipschitz.

For $0\le s\le L_d$,
\[
 H_d(s)
 =F_d(s)-s
 =\log\left(d^{-s/\log d}
 \G_{p_d(s)}^{\C}(d;d,d)\right).
\]
If $0\le s\le t\le L_d$, the $1$-Lipschitz estimate gives
\[
 F_d(t)-F_d(s)\le t-s.
\]
After subtracting $t-s$ this becomes $H_d(t)\le H_d(s)$, so $H_d$ is nonincreasing. Since $H_d(s)=F_d(s)-s$, convexity of $F_d$ also gives convexity of $H_d$. At $s=0$, $p_d(0)=2$; at $s=L_d$, $p_d(L_d)=\infty$. Hence
\[
 H_d(0)=\log\G_2^{\C}(d;d,d),
 \qquad
 H_d(L_d)=\log B_d^{\C}.
\]
Monotonicity gives
\[
 F_d(s)\ge s+\log B_d^{\C},
\]
and even convexity gives $F_d(s)\ge F_d(0)$. This proves \eqref{eq:profile-lower}. The chord bound for the convex function $H_d$ gives
\[
 H_d(s)
 \le
 \left(1-\frac{2s}{\log d}\right)H_d(0)
 +\frac{2s}{\log d}H_d(L_d),
\]
which is \eqref{eq:profile-upper} after adding $s$ and using evenness.

The functions $F_d$ are uniformly $1$-Lipschitz on every fixed compact interval once $d$ is large enough. Since $F_d(0)=\log\G_2^{\C}(d;d,d)\le\log K_G^{\C}$, they are also uniformly bounded on each fixed compact interval. Moreover,
\[
 \G_2^{\C}(d;d,d)\longrightarrow K_G^{\C}.
\]
Indeed, the sequence is nondecreasing by padding, bounded above by $K_G^{\C}$, and every fixed finite Grothendieck configuration embeds in a sufficiently large square problem. Proposition \ref{prop:endpoint} bounds $B_d^{\C}$ in $[1,2/\sqrt\pi]$. A diagonal Arzel\`a--Ascoli argument therefore gives the stated subsequential convergence. Passing to the limit in \eqref{eq:profile-lower} and \eqref{eq:profile-upper} proves \eqref{eq:profile-limit}.
\end{proof}

\section{Relation with Marcinkiewicz--Zygmund theory and prior work}

The exponent $|1/p-1/2|$ already appears in the finite-dimensional
Marcinkiewicz--Zygmund theory of Defant and Junge
\cite{DefantJunge1997}. In the $L_\infty\to L_1$ setting, their general
operator estimates give the growth rate $N^{|1/p-1/2|}$ outside the
dimension-free range. For the matrix multiplication tensor, Friedland,
Lim and Zhang \cite{FriedlandLimZhang2019} identified Grothendieck's
constant with $\norm{\mu_{r,m,n}}_{1,2,\infty}$ and obtained
\[
 \norm{\mu_{r,m,n}}_{1,p,\infty}
 \le K_G^{\K}r^{|1/p-1/2|}.
\]
Thus the exponent and the special role of $p=2$ are already present in
these earlier results.

Theorems \ref{thm:A} and \ref{thm:B} refine this estimate for the
rectangular multiplication tensor by replacing the vector dimension
$r$ with the effective dimension $d=\min\{r,m,n\}$, with matching
upper and lower estimates. The pointwise identity
$G_2^{(r)}(A)=G_2^{(d)}(A)$ explains this reduction in the Hilbertian
case, while Remark \ref{rem:rectangular-endpoint-saturation} shows
that the corresponding pointwise stabilization need not hold at
$p=\infty$.

Theorem \ref{thm:C} keeps information which is lost when one passes
immediately to the single parameter $d$. The exact endpoint
compression produces two rectangular lower envelopes, both valid for
the full H\"older scale and involving all intermediate dimensions.
Over the complex field, interpolation between the Hilbertian problem
and the exact rectangular endpoints gives the corresponding
endpoint-sensitive upper envelopes. Thus the effective dimension
controls the universal power law, whereas the tensor ratios
$\mathcal R_{\K}(m,k)$ and $\mathcal R_{\K}(n,k)$ retain finer
aspect-ratio information.

For dimension-dependent exponents, Theorem \ref{thm:D} identifies the
boundedness threshold and describes the subcritical, critical, and
supercritical regimes through the parameter
$\tau_N=|1/p_N-1/2|\log d_N$.

\section{Endpoint questions}

The remaining endpoint quantity
\[
 B_d^{\K}=
 \frac1{\sqrt d}\rho(\ell_1^d(\K),\ell_1^d(\K))
\]
controls the lower constant in Theorem \ref{thm:A}. Theorem \ref{thm:B} and Proposition \ref{prop:endpoint} give the quantitative real bounds
\[
 \sqrt{\frac{d}{h_+(d)}}
 \le B_d^{\R}
 \le
 \frac{\sqrt d}{M_d},
\]
with
\[
 1-\sqrt{\frac{d}{h_+(d)}}=O(d^{-5/6}),
 \qquad
 \limsup_{d\to\infty}B_d^{\R}\le\sqrt{\frac\pi2},
\]
and the dimensionwise complex bounds
\[
 1\le B_d^{\C}\le\frac2{\sqrt\pi}.
\]
Determining the asymptotic behavior of $B_d^{\K}$ would sharpen the lower edge of the critical window in Theorem \ref{thm:D}. In the real case this is equivalently the asymptotic problem for the restricted summing norm
\[
 \frac1{\sqrt d}\pi_1^{(d)}(I_{\ell_1^d}).
\]

\section*{Declarations}

\noindent\textit{Funding.}
J. Santos and D. Serrano-Rodr\'iguez  are supported by Grants No. 406457/2023-9 (CNPq/MCTI No. 10/2023) and No. 403964/2024-5 (MCTI/CNPq No. 16/2024), both from the Conselho Nacional de Desenvolvimento Científico e Tecnológico (CNPq, Brazil). In addition, J. Santos is supported by Grant No. 305655/2025-6.

\medskip

\noindent\textit{Competing interests.}
The authors declare no competing interests.

\medskip

\noindent\textit{Data availability.}
No datasets were generated or analysed in this work.

\medskip

\noindent\textit{Use of artificial intelligence.}
Generative artificial-intelligence tools, including OpenAI's ChatGPT, were used for
auxiliary verification of calculations, bibliographic consistency, and language and
presentation assistance during manuscript preparation. All mathematical arguments and
final expository decisions were independently checked and remain the responsibility of
the authors.

\end{document}